\newcommand{\bburl}[1]{\textcolor{blue}{\url{#1}}}
\newtheorem{thm}{Theorem}[section]
\newtheorem{cor}[thm]{Corollary}
\newtheorem{lem}[thm]{Lemma}
\newtheorem{prop}[thm]{Proposition}
\newtheorem{rek}[thm]{Remark}
\newtheorem{prob}[thm]{Problem}
\DeclareFixedFont{\ttb}{T1}{txtt}{bx}{n}{12} % for bold
\DeclareFixedFont{\ttm}{T1}{txtt}{m}{n}{12}  % for normal
\definecolor{deepblue}{rgb}{0,0,0.5}
\definecolor{deepred}{rgb}{0.6,0,0}
\definecolor{deepgreen}{rgb}{0,0.5,0}
\newcommand\pythonstyle{\lstset{
language=Python,
basicstyle=\ttm,
morekeywords={self},              % Add keywords here
keywordstyle=\ttb\color{deepblue},
emph={MyClass,__init__},          % Custom highlighting
emphstyle=\ttb\color{deepred},    % Custom highlighting style
stringstyle=\color{deepgreen},
frame=tb,                         % Any extra options here
showstringspaces=false
}}
\newcommand\pythoninline[1]{{\pythonstyle\lstinline!#1!}}
\definecolor{ao}{rgb}{0.0, 0.5, 0.0}
\numberwithin{equation}{section}
\DeclareFontFamily{U}{mathx}{}
\DeclareFontShape{U}{mathx}{m}{n}{<-> mathx10}{}
\DeclareSymbolFont{mathx}{U}{mathx}{m}{n}
\DeclareMathAccent{\widehat}{0}{mathx}{"70}
\DeclareMathAccent{\widecheck}{0}{mathx}{"71}
\title{Problems Regarding a Pair of Diophantine Equations}
\author[H. V. Chu]{H\`ung Vi\d{\^e}t Chu}
\email{\textcolor{blue}{\href{mailto:hchu@wlu.edu}{hchu@wlu.edu}}}
\address{Department of Mathematics, Washington and Lee University, Lexington, VA 24450, USA}
\author[S. J. Miller]{Steven J. Miller}
\email{\textcolor{blue}{\href{mailto:sjm1@williams.edu}{sjm1@williams.edu},
\href{mailto:Steven.Miller.MC.96@aya.yale.edu}{Steven.Miller.MC.96@aya.yale.edu}}}
\address{Department of Mathematics and Statistics, Williams College, Williamstown, MA 01267, USA}
\author[G. Tresch]{Garrett Tresch}
\email{\textcolor{blue}{\href{mailto:treschgd@tamu.edu}{treschgd@tamu.edu}}}
\address{Department of Mathematics, Texas A\&M University, College Station, TX 77840, USA}
\begin{document}

\thanks{We would like to thank the participants at the Polymath Jr.\ Program for helpful conversations.}

\subjclass[2020]{11D04 (primary); 11B39; 11B83  (secondary)}

\keywords{Diophantine equation; Fibonacci numbers; sequences}

\maketitle

\begin{abstract}
For two relatively prime positive integers $a, b\in \mathbb{N}$, it is known that exactly one of the two Diophantine equations 
$$ax + by \ =\ \frac{(a-1)(b-1)}{2}\ \mbox{ and }\ 1 + ax + by \ =\ \frac{(a-1)(b-1)}{2}$$
has a nonnegative integral solution $(x, y)$. Furthermore, the solution is unique. In this note, we summarize recent results and some new ones on the solution of the two equations and provide an overview of  problems for future investigation, some of which were presented at the 2025 International Conference on Class Groups of Number Fields and Related Topics.
\end{abstract}

%%%%%%%%%%%%%%%%%%%%%%%%%%%%%%%%%%%%%%%%%%%%%%%%%%%%%%%%%%%%%%%%%%%%%%%%%%%%%%%%%%%%%%%%%%%%%%%%%%%%%%%%%%%%
%%%%%%%%%%%%%%%%%%%%%%%%%%%%%%%%%%%%%%%%%%%%%%%%%%%%%%%%%%%%%%%%%%%%%%%%%%%%%%%%%%%%%%%%%%%%%%%%%%%%%%%%%%%%
%%%%%%%%%%%%%%%%%%%%%%%%%%%%%%%%%%%%%%%%%%%%%%%%%%%%%%%%%%%%%%%%%%%%%%%%%%%%%%%%%%%%%%%%%%%%%%%%%%%%%%%%%%%%
%%%%%%%%%%%%%%%%%%%%%%%%%%%%%%%%%%%%%%%%%%%%%%%%%%%%%%%%%%%%%%%%%%%%%%%%%%%%%%%%%%%%%%%%%%%%%%%%%%%%%%%%%%%%
\section{The two Diophantine equations with coefficients from special sequences}
In the study of cyclotomic polynomials, Beiter \cite{Be} showed that exactly one of the two Diophantine equations
$$ax + by \ =\ \frac{(a-1)(b-1)}{2}\ \mbox{ and }\ 1 + ax + by \ =\ \frac{(a-1)(b-1)}{2},$$
where $a$ and $b$ are prime numbers, has a nonnegative integral solution, and that solution is unique. Later, Chu \cite{C4} observed that the result still holds for relatively prime positive integers $a$ and $b$ and studied the solution when $a$ and $b$ are consecutive Fibonacci numbers. 

\begin{thm}\label{m1}\cite[Theorem 1.1]{C4}
Let $a,b\in\mathbb{N}$ be relatively prime.  Exactly one of the two equations
\begin{equation}\label{e11}ax + by \ =\ \frac{(a-1)(b-1)}{2}\ \mbox{ and }\ 1 + ax + by \ =\ \frac{(a-1)(b-1)}{2}\end{equation}
has a nonnegative integral solution. Furthermore, the solution is unique. 
\end{thm}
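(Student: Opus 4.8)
The plan is to recognize Theorem~\ref{m1} as a statement about the numerical semigroup $S=\langle a,b\rangle=\{ax+by:\ x,y\in\mathbb{Z}_{\ge 0}\}$. Writing $N=\frac{(a-1)(b-1)}{2}$, the first equation in \eqref{e11} asks whether $N\in S$ and the second whether $N-1\in S$. The key preliminary identity is
\[
N+(N-1)\ =\ (a-1)(b-1)-1\ =\ ab-a-b\ =:\ F,
\]
the Frobenius number of $S$; and since $\gcd(a,b)=1$ forbids $a$ and $b$ from both being even, $(a-1)(b-1)$ is even, so $F$ is \emph{odd}, and $F\ge 1$ once $a,b\ge 2$. (When $a=1$ or $b=1$ we have $N=0$; then $ax+by=0$ has the unique solution $(0,0)$ while $1+ax+by=0$ has none, so the theorem holds, and we may henceforth assume $a,b\ge 2$.) Hence $N-1=\tfrac{F-1}{2}$ and $N=F-(N-1)$, and the theorem reduces to the claim that \emph{exactly one of $N-1$ and $F-(N-1)$ lies in $S$, and it has a unique representation}.

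The main tool I would set up is the canonical-representative lemma: for $b\ge 1$, every integer $m$ has a unique expression $m=ax+by$ with $0\le x\le b-1$ and $y\in\mathbb{Z}$ (take $x\equiv a^{-1}m\pmod b$ in that range and $y=(m-ax)/b$), and moreover $m\in S$ if and only if this canonical $y$ satisfies $y\ge 0$ --- because any other integer representation changes $x$ by a nonzero multiple of $b$ and so forces $x<0$ or $x\ge b$. Granting this, Sylvester-type symmetry is immediate: if $(x,y)$ is the canonical representative of $m$, then $(b-1-x,\,-1-y)$ is the canonical representative of $F-m$, because $a(b-1-x)+b(-1-y)=F-m$ and $0\le b-1-x\le b-1$. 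Since the conditions $y\ge 0$ and $-1-y\ge 0$ are mutually exclusive and exhaustive over $\mathbb{Z}$, exactly one of $m\in S$ and $F-m\in S$ holds for every integer $m$. Applying this with $m=N-1$, which lies in $[0,F]$ because $F$ is odd and $\ge 1$, shows that exactly one of $N-1$ and $N$ belongs to $S$; equivalently, exactly one of the two equations in \eqref{e11} has a nonnegative integral solution.

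For uniqueness I would note that any $m<ab$ admits at most one nonnegative representation: if $ax_1+by_1=ax_2+by_2$ with all coordinates nonnegative and $(x_1,y_1)\ne(x_2,y_2)$, then $a(x_1-x_2)=b(y_2-y_1)$ with $\gcd(a,b)=1$ forces $b\mid x_1-x_2$, so $\max(x_1,x_2)\ge b$ and hence $m\ge ab$. Since $N-1<N<ab$ for $a,b\ge 2$, whichever of the two equations is solvable has a unique solution. The only real content is the canonical-representative/symmetry step; the reduction and the uniqueness argument are bookkeeping. The main obstacle --- and the point I would be most careful about --- is ensuring that the symmetry is applied at exactly the right value: one must verify the parity of $F$, check that $m=N-1$ lands in $[0,F]$ where the argument bites, and treat the degenerate cases $a=1$ or $b=1$ separately, so that the ``exactly one'' conclusion is not vacuously realized on the wrong side.
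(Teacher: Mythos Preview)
Your argument is correct. You reduce the claim to the Sylvester--Frobenius symmetry for the numerical semigroup $S=\langle a,b\rangle$: writing $F=ab-a-b$ and $N=\tfrac{(a-1)(b-1)}{2}$, you observe that $N+(N-1)=F$, establish via canonical representatives that exactly one of $m$ and $F-m$ lies in $S$ for every integer $m$, and handle uniqueness by the standard $b\mid(x_1-x_2)$ divisibility trick together with $N<ab$. The edge cases $a=1$ or $b=1$ and the parity check on $F$ are dealt with cleanly.

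As for comparison: the paper does not actually supply a proof of Theorem~\ref{m1}. The result is quoted from \cite[Theorem~1.1]{C4} and used as background for the survey of problems that follows, so there is no in-paper argument to compare against. Your numerical-semigroup route is the natural one and is self-contained; it is likely close in spirit to what the cited reference does, since the identity $N+(N-1)=ab-a-b$ together with Sylvester symmetry is essentially the only mechanism available here.
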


In this note, we summarize recent and new results concerning the solution of the two equations and outline directions for future research. The problems we propose are open-ended and accessible, yet rich enough to stimulate further investigation. We hope they will foster engagement among researchers who share a passion for Diophantine equations.

%%%%%%%%%%%%%%%%%%%%%%%%%%%%%%%%%%%%%%%%%%%%%
%%%%%%%%%%%%%%%%%%%%%%%%%%%%%%%%%%%%%%%%%%%%%%
\subsection{Fibonacci numbers and their powers}
Define Fibonacci numbers using one of the standard conventions: $F_1 = F_2 = 1$ and $F_n = F_{n-1} + F_{n-2}$ for $n\ge 3$. Note that consecutive Fibonacci numbers are relatively prime because if $d = \gcd(F_{n-1}, F_n)$ for some $n\ge 2$, then the recurrence allows us to go backward to have $d = \gcd(F_1, F_2)$, which is $1$. This observation inspires \cite[Theorem 1.4]{C4} that gives the solution when $(a,b) = (F_n, F_{n+1})$; for example,   
\begin{align}\label{e10}
    &F_{6k}\frac{F_{6k-1}-1}{2} + F_{6k+1}\frac{F_{6k-1}-1}{2}\ =\ \frac{(F_{6k}-1)(F_{6k+1}-1)}{2},\mbox{ and}\nonumber\\
    &1 + F_{6k+4}\frac{F_{6k+4}-1}{2} + F_{6k+5}\frac{F_{6k+2}-1}{2}\ =\ \frac{(F_{6k+4}-1)(F_{6k+5}-1)}{2}, \mbox{ for all }k\ge 1.
\end{align}
More intriguing solutions appear when $F_n$ is replaced with $F_n^2$ or $F_n^3$. Below we sample two identities from  the work of a Polymath Jr.\ 2024's group; see \cite[Theorem 1.3 and 1.5]{CCKKLMYY} for all the cases: for $n\equiv 0, 2, 3, 5\mod 6$,
$$
    F_n^2\left(F_n^2 - \frac{F_{n-1}^2+1}{2}\right) + F^2_{n+1}\frac{F_{n-1}^2-1}{2}\ =\ \frac{(F_n^2-1)(F_{n+1}^2-1)}{2},
$$
and for $m\ge 2$, 
$$
F^3_{2m-1}\left(\sum_{k=1}^{2m-1}(-1)^{k-1}F_k^3\right) + F^3_{2m}\left(\sum_{k=2}^{2m-2}F_k^3\right)\ =\ \frac{(F^3_{2m-1}-1)(F^3_{2m}-1)}{2}.
$$
For $n\equiv 1, 4\mod 6$ and for $(a,b) = (F^3_{2m}, F^3_{2m+1})$, the second equation in \eqref{e11} is used. 
These findings motivate the question of whether there always exists a formula for the solution when $(a,b)$ are other powers of Fibonacci numbers. 

\begin{prob}
 For $(i,j)\in \mathbb{N}^2$, find the nonnegative integral solution $(x,y)$ to
 $$
     F_n^i x + F_{n+1}^{j} y \ =\ \frac{(F_n^i-1)(F^j_{n+1}-1)}{2}\mbox{ or }1 + F_n^i x + F_{n+1}^{j} y \ =\ \frac{(F_n^i-1)(F^j_{n+1}-1)}{2}.
     $$
Is it true that if $i = j = k$, then the solution must involve the $k$\textsuperscript{th} power of Fibonacci numbers?
\end{prob}

%%%%%%%%%%%%%%%%%%%%%%%%%%%%%%%%%%%%%%%%%%%%%
%%%%%%%%%%%%%%%%%%%%%%%%%%%%%%%%%%%%%%%%%%%%%%
\subsection{Balancing numbers and Lucas-balancing numbers}
A positive integer $n$ is called \textit{balancing} (see \cite{BP}) if there is a nonnegative integer $d$ with
$$1+2+\cdots + (n-1)\ =\ (n+1) + \cdots + (n+d).$$
The sequence of balancing numbers $(B_n)_{n=1}^\infty$ satisfies $B_1 = 1$, $B_2 = 6$, and $B_n = 6B_{n-1}-B_{n-2}$ for $n\ge 3$. Meanwhile, the \textit{Lucas-balancing} numbers $(C_n)_{n=1}^\infty$ are recursively defined as $C_1 = 3$, $C_2 = 17$, and $C_n= 6C_{n-1} - C_{n-2}$ for $n\ge 3$. The motivation for Lucas-balancing numbers is that $(C_n)_{n=1}^\infty$ are related to $(B_n)_{n=1}^\infty$ in the same way that Lucas numbers are associated with Fibonacci numbers \cite{Pa}.  By finding the solution to \eqref{e11}, Davala \cite{Da} established intriguing identities that involved $B_n$'s and $C_n$'s; see \cite[Theorems 2.1--2.6]{Da}. Since $\gcd(B_{4n}, B_{4n+2}) = 6$, we cannot let $a = B_{4n}$ and $b = B_{4n+2}$ in \eqref{e11}; instead, \cite[Theorem 2.3]{Da} lets  $a = B_{4n}/6$ and $b = B_{4n+2}/6$. This motivates the following function $\Gamma: \mathbb{N}^2\rightarrow \{0,1\}$ introduced in \cite{ACLLMM}, where $\Gamma(a,b) = 0$ if 
$$\frac{a}{\gcd(a,b)}x + \frac{b}{\gcd(a,b)}y \ =\ \frac{(a/\gcd(a,b)-1)(b/\gcd(a,b)-1)}{2}$$
has the nonnegative integral solution, and $\Gamma(a,b) = 1$ if
$$1 + \frac{a}{\gcd(a,b)}x + \frac{b}{\gcd(a,b)}y \ =\ \frac{(a/\gcd(a,b)-1)(b/\gcd(a,b)-1)}{2}$$
has the nonnegative integral solution. 

\begin{prob}
   For $(a,b)\in \mathbb{N}^2$, investigate any relation between $\Gamma(a,b)$ and $\Gamma(a^2, b^2)$ or generally, $\Gamma(a^i, b^j)$ with $i,j\in \mathbb{N}$. 
\end{prob}

We discuss the function $\Gamma$ more in the next section.

%%%%%%%%%%%%%%%%%%%%%%%%%%%%%%%%%%%%%%%%%%%%%
%%%%%%%%%%%%%%%%%%%%%%%%%%%%%%%%%%%%%%%%%%%%%%
\subsection{Fibonacci-like sequences}
A sequence $(t_n)_{n=1}^\infty$ is called \textit{Fibonacci-like} if the first two terms $t_1$ and $t_2$ are relatively prime positive integers, and $t_n = t_{n-1} + t_{n-2}$ for $n\ge 3$. It follows from the Fibonacci recurrence that $t_n = F_{n-2}t_1 + F_{n-1}t_2$ and $\gcd(t_n, t_{n+1}) = 1$ for all $n$. We are interested in the solution to 
\begin{equation}\label{e7}
    t_n x + t_{n+1}y \ =\ \frac{(t_n-1)(t_{n+1}-1)}{2}\ \mbox{ or }\ 1 + t_n x + t_{n+1}y \ =\ \frac{(t_n-1)(t_{n+1}-1)}{2}.
\end{equation}

Chu et al. \cite{CGGJMS} presented six cases corresponding to $n\mod 6$. For expository purpose, we focus on the case $n\equiv 4\mod 6$. 

\begin{thm}\cite[Theorem 3.4]{CGGJMS}
    Given $(u, v, n, r)\in \mathbb{Z}^4$ with even $n$, it holds that
        \begin{align}\label{e6}
        &\underbrace{\frac{1}{2}\left((u-r)F_{n-1}+\frac{(u-r)v-1}{u}F_n - 1\right)}_{=:\Phi^{(0)}(u, v, n, r)}t_n + \nonumber\\
        &\underbrace{\frac{1}{2}\left(rF_{n-2}+\frac{vr+1}{u}F_{n-1}-1\right)}_{=:\Psi^{(0)}(u,v,n,r)}t_{n+1}\ =\ \frac{(t_n-1)(t_{n+1}-1)}{2}.
    \end{align}
    and 
        \begin{align}\label{e5}
        1 + &\underbrace{\frac{1}{2}\left((u-r)F_{n-1}+\frac{(u-r)v+1}{u}F_n - 1\right)}_{=:\Phi^{(1)}(u, v, n, r)}t_n + \nonumber\\
        &\underbrace{\frac{1}{2}\left(rF_{n-2}+\frac{vr-1}{u}F_{n-1}-1\right)}_{=:\Psi^{(1)}(u,v,n,r)}t_{n+1}\ =\ \frac{(t_n-1)(t_{n+1}-1)}{2}
    \end{align}
\end{thm}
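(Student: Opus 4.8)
\medskip
\noindent\emph{Proof proposal.}
The plan is to reduce both displays to elementary Fibonacci identities by a direct substitution. Write $(t_n)$ for the Fibonacci-like sequence with $t_1=u$ and $t_2=v$, so that, by the identity $t_n=F_{n-2}t_1+F_{n-1}t_2$ noted above, we have $t_n=F_{n-2}u+F_{n-1}v$ and $t_{n+1}=F_{n-1}u+F_{n}v$. Once $t_n,t_{n+1}$ are written this way, each of \eqref{e6} and \eqref{e5} becomes, after multiplication by $2$, a polynomial identity in $u$ and $v$ with $r$ and the even integer $n$ as parameters; so the whole argument is substitute‑and‑simplify, and the content lies in spotting the right groupings.

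Two facts do all the work. The first is the Catalan--Cassini identity $F_{n-1}^{2}-F_{n-2}F_{n}=(-1)^{n}$, which equals $+1$ precisely because $n$ is even; this is the sole place the parity hypothesis is used, and it is what forces the identity to take this particular form (with odd $n$ one would be led to the other equation of the pair). The second is the pair of rescaling relations $uF_{n-1}+vF_{n}=t_{n+1}$ and $uF_{n-2}+vF_{n-1}=t_{n}$, immediate from the formula for $t_n$, together with the consequence $F_{n-1}t_{n+1}-F_{n}t_{n}=u\bigl(F_{n-1}^{2}-F_{n-2}F_{n}\bigr)=u$, again using that $n$ is even.

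For \eqref{e6} I would multiply the left‑hand side by $2$ and collect the terms carrying the factor $(u-r)$ separately from those carrying the factor $r$. The bracket multiplying $(u-r)$ should simplify, via the first rescaling relation, to $\bigl(F_{n-1}+\tfrac{v}{u}F_{n}\bigr)t_{n}=\tfrac{1}{u}t_{n}t_{n+1}$, and the bracket multiplying $r$ should simplify, via the second, to $\bigl(F_{n-2}+\tfrac{v}{u}F_{n-1}\bigr)t_{n+1}=\tfrac{1}{u}t_{n}t_{n+1}$; adding these gives $(u-r+r)\tfrac{1}{u}t_{n}t_{n+1}=t_{n}t_{n+1}$. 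The leftover terms (coming from the "$-1$"'s and the $\pm\tfrac1u$'s inside $\Phi^{(0)},\Psi^{(0)}$) are $\tfrac1u\bigl(F_{n-1}t_{n+1}-F_{n}t_{n}\bigr)=1$ and $-t_{n}-t_{n+1}$, so twice the left‑hand side equals $t_{n}t_{n+1}-t_{n}-t_{n+1}+1=(t_{n}-1)(t_{n+1}-1)$, which gives \eqref{e6} after dividing by $2$.

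For \eqref{e5} I would deduce it from \eqref{e6} rather than redo the computation: one checks directly that $\Phi^{(1)}-\Phi^{(0)}=\tfrac{F_n}{u}$ and $\Psi^{(1)}-\Psi^{(0)}=-\tfrac{F_{n-1}}{u}$, whence
\[
\bigl(\Phi^{(1)}t_{n}+\Psi^{(1)}t_{n+1}\bigr)-\bigl(\Phi^{(0)}t_{n}+\Psi^{(0)}t_{n+1}\bigr)\ =\ \tfrac1u\bigl(F_{n}t_{n}-F_{n-1}t_{n+1}\bigr)\ =\ -1,
\]
once more by Catalan--Cassini with $n$ even; adding $1$ to both sides and invoking \eqref{e6} yields \eqref{e5}. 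I expect no conceptual obstacle here: the only delicate point is bookkeeping the signs and the $1/u$ factors in the regrouping step, and the single idea that makes everything collapse is the factorization $F_{n-1}+\tfrac{v}{u}F_{n}=t_{n+1}/u$ together with its companion $F_{n-2}+\tfrac{v}{u}F_{n-1}=t_{n}/u$. Finally, if one wants the genuine Diophantine solution rather than the bare identity, one then selects $r$ so that $\Phi^{(\varepsilon)}$ and $\Psi^{(\varepsilon)}$ land in $\mathbb{Z}_{\ge 0}$ — and it is there, not in the identity itself, that the residue of $n$ modulo $6$ enters.
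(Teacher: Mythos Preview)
Your argument is correct: the two groupings via $uF_{n-1}+vF_{n}=t_{n+1}$ and $uF_{n-2}+vF_{n-1}=t_{n}$, together with the Cassini identity $F_{n-1}^{2}-F_{n-2}F_{n}=(-1)^{n}=1$ for even $n$, do exactly the job, and deducing \eqref{e5} from \eqref{e6} via the difference $\Phi^{(1)}-\Phi^{(0)}=F_n/u$, $\Psi^{(1)}-\Psi^{(0)}=-F_{n-1}/u$ is a clean shortcut.

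Note, however, that the paper does not itself supply a proof of this theorem: it is quoted verbatim from \cite[Theorem~3.4]{CGGJMS} and used here only as background for the discussion of Fibonacci-like sequences. So there is no in-paper argument to compare against. Your direct verification is entirely in the spirit of what one would expect the original proof to look like --- a substitute-and-simplify reduction to Cassini --- and your closing remark that the residue of $n$ modulo $6$ only matters when one seeks nonnegative \emph{integer} solutions (not for the bare identity) matches exactly how the paper frames the subsequent discussion of \cite[Theorem~3.7]{CGGJMS}.
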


While Identities \eqref{e5} and \eqref{e6} are true for all integers $u,v,r$, and even $n$, the pairs $(\Phi^{(0)}(u, v, n, r),\Psi^{(0)}(u,v,n,r))$ and $(\Phi^{(1)}(u, v, n, r),\Psi^{(1)}(u,v,n,r))$ are not necessarily the desirable solution because they may not be nonnegative and integral. Since $u, v$, and $n$ are fixed, we need to choose a suitable $r$ that makes exactly one of the pairs the solution. The following lemma helps choose the right $r$.

\begin{lem}\label{l2} (cf.\ \cite[Lemmas 3.1 and 3.2]{CGGJMS})
    Let $(u,v)\in \mathbb{N}^2$ with $\gcd(u,v) = 1$. If $u$ is odd, then there exists a unique odd $r\in [1,u]$ with $vr\equiv \pm 1\mod u$. If $u$ is even, then there exists a unique odd $r\in [1,u]$ with $vr\equiv \pm 1\mod 2u$.
\end{lem}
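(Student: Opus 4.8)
The plan is to deduce both statements from the existence and uniqueness of multiplicative inverses modulo a suitable integer $N$, together with a short parity count: one takes $N=u$ when $u$ is odd and $N=2u$ when $u$ is even, exploiting in the second case that $u$ even forces $v$ odd.

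First I would treat the odd case. Since $\gcd(u,v)=1$, the class of $v$ is a unit modulo $u$, so there are unique $r_{+},r_{-}\in[1,u]$ with $vr_{+}\equiv 1$ and $vr_{-}\equiv -1\pmod u$. If $u=1$ the lemma is immediate, since $[1,u]=\{1\}$ and $1$ is odd, so assume $u\ge 3$. Then $r_{+}\ne u\ne r_{-}$ (because $vu\equiv 0\not\equiv\pm 1$) and $r_{+}\ne r_{-}$ (else $u\mid 2$); adding the two congruences and using $\gcd(u,v)=1$ gives $u\mid(r_{+}+r_{-})$, and since $2\le r_{+}+r_{-}\le 2u-2$ this forces $r_{+}+r_{-}=u$. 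As $u$ is odd, exactly one of $r_{+},r_{-}$ is odd, and that element is the sought $r$. It is the \emph{only} odd element of $[1,u]$ solving $vr\equiv\pm 1\pmod u$: any such solution equals $r_{+}$ or $r_{-}$, and these cannot both be odd because their sum $u$ is odd.

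Then the even case. Now $u$ even together with $\gcd(u,v)=1$ forces $v$ odd, so $\gcd(v,2u)=1$ and $v$ is a unit modulo $2u$. Let $s_{+},s_{-}\in[1,2u-1]$ be the unique solutions of $vs_{+}\equiv 1$ and $vs_{-}\equiv -1\pmod{2u}$; as in the odd case they are distinct (since $u\ge 2$) and $s_{+}+s_{-}=2u$. The key extra observation is that $v$ is odd, so $vs_{\pm}\equiv\pm 1$ is odd and hence $s_{+}$ and $s_{-}$ are both odd; consequently neither equals the even number $u$, and from $s_{+}+s_{-}=2u$ exactly one of them lies in $[1,u-1]$ and the other in $[u+1,2u-1]$. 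The one in $[1,u-1]\subseteq[1,u]$, call it $r$, is odd and satisfies $vr\equiv\pm 1\pmod{2u}$; it is unique in $[1,u]$ because a solution of $vr\equiv 1$ (resp.\ $vr\equiv -1$) modulo $2u$ lying in $[1,u]$ must be $s_{+}$ (resp.\ $s_{-}$), and only one of these lies in $[1,u]$.

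I expect no serious obstacle beyond the degenerate check $u=1$ and the parity bookkeeping; the one load-bearing point is the even case, where it is essential to work modulo $2u$ rather than $u$ and to notice that $v$ is forced odd, which is exactly what makes \emph{both} inverses odd and thereby separates them across $u$. An alternative and perhaps cleaner packaging of the same idea is to note that $r\mapsto u-r$ (odd case) and $r\mapsto 2u-r$ (even case) is an involution on the solution set of $vr\equiv\pm 1$ interchanging the $+1$- and $-1$-solutions, and then to count its orbits according to parity.
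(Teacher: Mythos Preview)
Your argument is correct in both cases, including the degenerate check $u=1$ and the key parity observation in the even case (that $v$ is forced odd, whence both $s_{\pm}$ are odd and therefore split across $u$ via $s_{+}+s_{-}=2u$). Note, however, that the paper does not supply its own proof of this lemma: it is merely stated with a citation to \cite[Lemmas~3.1 and~3.2]{CGGJMS}, so there is no in-paper argument to compare against. Your approach via explicit inverses and the pairing $r\mapsto u-r$ (respectively $r\mapsto 2u-r$) is the natural one and would likely match the cited source.
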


Denote the $r$ in Lemma \ref{l2} by $\mathbb{O}(u,v)$. Then \cite[Theorem 3.7]{CGGJMS} states that the solution $(x,y)$ to \eqref{e7} when $n\equiv 4\mod 6$ is
$$(\Phi^{(1)}(u, v, n, \mathbb{O}(u,v)), \Psi^{(1)}(u,v,n, \mathbb{O}(u,v)))$$
if $$\begin{cases} u \mbox{ is odd and }v\mathbb{O}(u,v)\equiv 1\mod u,\mbox{ or }\\
u\mbox{ is even and }v\mathbb{O}(u,v)\equiv 1\mod 2u,\end{cases}$$
and $(x,y)$ is
$$(\Phi^{(0)}(u, v, n, \mathbb{O}(u,v)), \Psi^{(0)}(u,v,n, \mathbb{O}(u,v)))$$
if $$\begin{cases} u \mbox{ is odd}, u\ge 3,\mbox{ and }v\mathbb{O}(u,v)\equiv -1\mod u,\mbox{ or }\\
u\mbox{ is even and }v\mathbb{O}(u,v)\equiv -1\mod 2u.\end{cases}$$

For example, in the case of Fibonacci numbers, we have $u = v = 1$ and $\mathbb{O}(1,1) = 1$, so 
$$(x,y) \ =\ (\Phi^{(1)}(1, 1, n, 1), \Psi^{(1)}(1,1,n, 1))\ =\ \left(\frac{1}{2}(F_n-1),\frac{1}{2}(F_{n-2}-1)\right),$$
which recovers \eqref{e10}. 

\begin{prob}
Fix a recurrence relation other than the Fibonacci recurrence. 
    Find a formula that computes the solution $(x,y)$ to \eqref{e11} when $a$ and $b$ are taken from a sequence having the recurrence with different initial terms.
\end{prob}

%%%%%%%%%%%%%%%%%%%%%%%%%%%%%%%%%%%%%%%%%%%%%%%%%%%%%%%%%%%%%%%%%%%%%%%%%%%%%%%%%%%%%%%%%%%%%%%%%%%%%%%%%%%%
%%%%%%%%%%%%%%%%%%%%%%%%%%%%%%%%%%%%%%%%%%%%%%%%%%%%%%%%%%%%%%%%%%%%%%%%%%%%%%%%%%%%%%%%%%%%%%%%%%%%%%%%%%%%
%%%%%%%%%%%%%%%%%%%%%%%%%%%%%%%%%%%%%%%%%%%%%%%%%%%%%%%%%%%%%%%%%%%%%%%%%%%%%%%%%%%%%%%%%%%%%%%%%%%%%%%%%%%%
%%%%%%%%%%%%%%%%%%%%%%%%%%%%%%%%%%%%%%%%%%%%%%%%%%%%%%%%%%%%%%%%%%%%%%%%%%%%%%%%%%%%%%%%%%%%%%%%%%%%%%%%%%%%
\section{The function $\Gamma: \mathbb{N}^2 \rightarrow \{0,1\}$}

%%%%%%%%%%%%%%%%%%%%%%%%%%%%%%%%%%%%%%%%%%%%%
%%%%%%%%%%%%%%%%%%%%%%%%%%%%%%%%%%%%%%%%%%%%%%
\subsection{The sequence $(\Gamma(a_n, a_{n+1}))_{n=1}^\infty$, given $(a_n)_{n=1}^\infty$}

Arachchi et al.\ \cite{ACLLMM} studied the sequence $(\Gamma(a_n, a_{n+1}))_{n=1}^\infty$ for special sequences $(a_n)_{n=1}^\infty$ including the $k$\textsuperscript{th}-power sequence $(n^k)_{n=1}^\infty$, arithmetic progressions, and shifted geometric sequences $(ar^{n-1} + 1)_{n=1}^\infty$. They showed that $(\Gamma(n^k, (n+1)^k))_{n=1}^\infty$ eventually alternates between $0$ and $1$, namely $0,1,0,1,0,1,\ldots$; if $(a_n)_{n=1}^\infty$ is an arithmetic progression, then $(\Gamma(a_n, a_{n+1}))_{n=1}^\infty$ alternates between $0$ and $1$ right away; last but not least, $(\Gamma(ar^{n-1}+1, ar^n+1))_{n=1}^\infty$ also alternates between $0$ and $1$ if $\gcd(a+1, r-1)$ is even. The proofs of these results used the following method to compute $\Gamma$. For two positive integers $a,b$ with $b/\gcd(a,b) > 1$, let $\Theta(a,b)$ be the unique multiplicative inverse of $a/\gcd(a,b)$ modulo $b/\gcd(a,b)$.

\begin{thm} \cite[Theorem 1.1]{ACLLMM}\label{mt}
 Let $a, b\in \mathbb{N}$. If $a|b$ or $b|a$, then $\Gamma(a,b) = 0$. Otherwise:
 \begin{enumerate}
     \item[a)] when $a/\gcd(a,b)$ is odd, then $\Gamma(a,b) = 0$ if and only if $\Theta(b,a)$ is odd;
     \item[b)] when $a/\gcd(a,b)$ is even, then $\Gamma(a,b) = 0$ if and only if $\Theta(a,b)$ is odd. 
 \end{enumerate}
\end{thm}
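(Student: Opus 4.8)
The plan is to reduce to the case $\gcd(a,b)=1$, reinterpret the condition $\Gamma(a,b)=0$ as ``$N:=\tfrac{(a-1)(b-1)}{2}$ is representable as $ax+by$ with $x,y\ge 0$'', and then decide that representability by an explicit residue computation that hinges on the identity $2N=ab-a-b+1\equiv 1-b\pmod a$.

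First I would observe that $\Gamma(a,b)$, $\Theta(a,b)$, and $\Theta(b,a)$ depend only on the coprime pair $a':=a/\gcd(a,b)$, $b':=b/\gcd(a,b)$, so it suffices to treat $\gcd(a,b)=1$. If then $a\mid b$ or $b\mid a$, coprimality forces $a=1$ or $b=1$, whence $(a-1)(b-1)/2=0$ and $(x,y)=(0,0)$ solves the first equation, so $\Gamma(a,b)=0$. Assume now $a,b\ge 2$ and $\gcd(a,b)=1$. I would record two further reductions: $\Gamma$ is symmetric, $\Gamma(a,b)=\Gamma(b,a)$ (swapping $a$ and $b$ swaps $x$ and $y$), and $a,b$ are not both even. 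Granting part (a) for all such pairs with odd first coordinate, part (b) (where $a$ is even, hence $b$ odd) follows by applying part (a) to $(b,a)$ and using $\Gamma(a,b)=\Gamma(b,a)$. So it remains to prove part (a): assume $a$ odd (hence $a\ge 3$), $b\ge 2$, $\gcd(a,b)=1$.

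Put $N:=(a-1)(b-1)/2$, so $2N=ab-a-b+1\in\mathbb{N}$ and $N\in\mathbb{N}$. Since $1+ax+by=N$ is the equation $ax+by=N-1$, Theorem~\ref{m1} tells us exactly one of $N-1,N$ is of the form $ax+by$ with $x,y\ge 0$, and $\Gamma(a,b)=0$ precisely when $N$ is. I would then use the elementary criterion: if $y_0\in[0,a-1]$ is the unique residue with $by_0\equiv N\pmod a$, then $N$ is representable iff $by_0\le N$ (with $x=(N-by_0)/a$), because any nonnegative representation forces $y\equiv y_0\pmod a$, hence $y\ge y_0$. The crux is to compute $y_0$. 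Let $\beta:=\Theta(b,a)\in[1,a-1]$ be the inverse of $b$ modulo $a$, so $y_0\equiv\beta N\pmod a$; since $2N\equiv 1-b\pmod a$, this gives $2y_0\equiv 2\beta N\equiv\beta(1-b)\equiv\beta-1\pmod a$. As $2y_0\in[0,2a-2]$, it equals whichever of $\beta-1$ and $\beta-1+a$ is even, and because $a$ is odd these have opposite parities; hence $y_0=(\beta-1)/2$ if $\beta$ is odd and $y_0=(\beta-1+a)/2$ if $\beta$ is even. Substituting, one finds $N-by_0=\tfrac12\big((b-1)a+1-b\beta\big)$ when $\beta$ is odd and $N-by_0=\tfrac12\big(1-a-b\beta\big)$ when $\beta$ is even. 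In the odd case this is $\ge 0$, since $b\beta\equiv 1\pmod a$ and $b\beta\le b(a-1)<ab$ force $b\beta\le(b-1)a+1$; so $N$ is representable and $\Gamma(a,b)=0$. In the even case it is $<0$ because $a\ge 3$ and $b\beta\ge 1$; so $N$ is not representable and $\Gamma(a,b)=1$. Thus $\Gamma(a,b)=0$ iff $\Theta(b,a)$ is odd, proving part (a); part (b) follows as above.

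The step I expect to be the main obstacle is the residue bookkeeping in the previous paragraph: pinning down which of the two lifts is $2y_0$ (it is the integrality of $y_0$, together with $a$ being odd, that disambiguates this), and then pushing the factor $2$ through the inequality $by_0\le N$ so that it collapses to the clean bound $b\beta\le(b-1)a+1$. The remaining ingredients — the $\gcd$ and symmetry reductions, the trivial divisibility cases, and the invocation of Theorem~\ref{m1} — are routine.
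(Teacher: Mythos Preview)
Your argument is correct. The reduction to coprime $a,b$, the disposal of the trivial divisibility cases, the symmetry reduction of part~(b) to part~(a), and the core computation of $y_0$ via the congruence $2y_0\equiv\beta-1\pmod a$ all check out; in particular your inequality $b\beta\le (b-1)a+1$ in the odd-$\beta$ case follows cleanly from $b\beta\equiv 1\pmod a$ together with $b\beta\le b(a-1)<ab$, and the negativity in the even-$\beta$ case is immediate since $a\ge 3$.

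There is nothing in the present paper to compare against: Theorem~\ref{mt} is quoted here without proof, as a citation of \cite[Theorem~1.1]{ACLLMM}. So your write-up is not a reproduction of the paper's argument but a self-contained proof supplied where the paper gives none. The approach you take --- reducing representability of $N=(a-1)(b-1)/2$ to the size of the least nonnegative residue $y_0$ of $\beta N$ modulo $a$, and then exploiting the parity of $\beta$ together with the oddness of $a$ to pin down $y_0$ exactly --- is the natural one and is essentially the argument one would expect in \cite{ACLLMM} as well.
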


\begin{prob}
Define the collection of sequences
$$\mathcal{F} \ =\ \{(a_n)_{n=1}^\infty: (\Gamma(a_n, a_{n+1}))_{n=1}^\infty\mbox{ eventually alternates between }0 \mbox{ and }1\}.$$ Characterize $\mathcal{F}$ or large subsets of $\mathcal{F}$.   
\end{prob}

\begin{prob}
Fix a linear recurrence relation. Given a sequence $(a_n)_{n=1}^\infty$ that satisfies the linear recurrence relation, how do the initial conditions affect $(\Gamma(a_n,a_{n+1}))_{n=1}^\infty$?    
\end{prob}

For example, the sequence $(a_n)_{n=1}^\infty$ of all $1$'s satisfies the recurrence $a_n = 2a_{n-1}-a_{n-2}$ with initial conditions $a_1 = a_2 = 1$, and $(\Gamma(a_n, a_{n+1}))_{n=1}^\infty$ is constantly $0$. Meanwhile, the sequence of natural numbers satisfies the same recurrence with initial terms $1$ and $2$, but $(\Gamma(n,n+1))_{n=1}^\infty = 0,1,0,1,\ldots$ according to \cite[Theorem 1.6]{ACLLMM}.

\begin{prob}\label{2seqs}
 Given two sequences of positive integers $(a_n)_{n=1}^\infty$ and $(b_n)_{n=1}^\infty$, study the sequence $(\Gamma(a_n, b_n))_{n=1}^\infty$.   
\end{prob}

Observe that $(\Gamma(a_n, a_{n+1}))_{n=1}^\infty$ is a special case of Problem \ref{2seqs} when $b_n = a_{n+1}$ for every $n\in \mathbb{N}$.

%%%%%%%%%%%%%%%%%%%%%%%%%%%%%%%%%%%%%%%%%%%%%
%%%%%%%%%%%%%%%%%%%%%%%%%%%%%%%%%%%%%%%%%%%%%%
\subsection{Given split}
Returning to the original result, we know that if $a$ and $b$ are relatively prime positive integers, then exactly one of the two equations $$i + ax + by \ = \ \frac{(a-1)(b-1)}{2}, \ \ \ i \in \{0, 1\}$$ has a positive integral solution. For a given $a$, the following result tells us how to choose $b$ to get the desired $\Gamma(a,b)$.

\begin{prop}\cite[Corollary 2.1]{ACLLMM}\label{pp1}
For $a\ge 1$, $\Gamma(a, 2a) = 0$; for $a\ge 2$, $\Gamma(a, 2a-1) = 1$.
\end{prop}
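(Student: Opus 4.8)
The plan is to invoke Theorem \ref{mt} and reduce each of the two claims to a single modular-inverse computation.

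First I would dispose of $\Gamma(a,2a)$: since $\gcd(a,2a)=a$ we have $a\mid 2a$, so the divisibility clause of Theorem \ref{mt} gives $\Gamma(a,2a)=0$ outright, valid for every $a\ge 1$. Nothing further is needed here.

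For $\Gamma(a,2a-1)$ with $a\ge 2$, I would start from the observation $2a-1\equiv -1\pmod a$, which shows $\gcd(a,2a-1)=1$; since both numbers exceed $1$, neither divides the other, so we are in the ``Otherwise'' branch of Theorem \ref{mt} with reduced numerator $a$ and reduced denominator $2a-1$. Then I would split on the parity of $a$. If $a$ is odd, part (a) of Theorem \ref{mt} says $\Gamma(a,2a-1)=0$ if and only if $\Theta(2a-1,a)$, the inverse of $2a-1$ modulo $a$, is odd; but $2a-1\equiv -1\pmod a$ forces $\Theta(2a-1,a)=a-1$, which is even because $a$ is odd, so $\Gamma(a,2a-1)=1$. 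If $a$ is even, part (b) says $\Gamma(a,2a-1)=0$ if and only if $\Theta(a,2a-1)$, the inverse of $a$ modulo $2a-1$, is odd; from $2a\equiv 1\pmod{2a-1}$ we read off that this inverse is $2$ (the canonical representative, since $2a-1\ge 3>2$), which is even, so again $\Gamma(a,2a-1)=1$.

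There is essentially no obstacle once Theorem \ref{mt} is available: the entire content is the pair of elementary congruences $2a-1\equiv -1\pmod a$ and $2a\equiv 1\pmod{2a-1}$, which pin the two relevant modular inverses down as $a-1$ and $2$, both even irrespective of $a$. The only points requiring a moment's care are verifying that we land in the non-divisibility case of Theorem \ref{mt} and that $a-1$ and $2$ are indeed the representatives in the ranges used to define $\Theta$ — this is precisely where the hypothesis $a\ge 2$ enters (e.g.\ to ensure $2a-1\ge 3$).
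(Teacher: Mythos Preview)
Your argument is correct. The paper does not supply its own proof of this proposition---it is quoted verbatim from \cite{ACLLMM} as Corollary~2.1 and then immediately used---so there is no in-paper proof to compare against; your derivation via Theorem~\ref{mt} (itself imported from the same source) is the natural route and almost certainly matches what \cite{ACLLMM} does.
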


Our new result constructs, for every $p\in [0,1]$, a strictly increasing sequence $(a_n)_{n=0}^\infty$ so that as $N \to \infty$, for $n \le N$, the percentage of the time that $\Gamma(a_{n-1}, a_n) = 0$ converges to $p$. 

\begin{cor}\label{cc1}
For every $p\in [0,1]$, there exists a strictly increasing sequence $(a_n)_{n=0}^\infty$ so that 
$$\lim_{N\rightarrow\infty}\frac{\#\{n\le N\,:\, \Gamma(a_{n-1},a_n) = 0\}}{N} \ =\ p.$$
\end{cor}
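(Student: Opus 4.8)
The plan is to build the sequence greedily, one step at a time, with Proposition \ref{pp1} as the only engine. From any integer $a\ge 2$ we may pass either to $2a$, which forces $\Gamma(a,2a)=0$, or to $2a-1$, which forces $\Gamma(a,2a-1)=1$; both moves strictly increase the term (since $2a-1>a$ for $a\ge 2$) and keep it $\ge 2$. Thus the value of $\Gamma$ at each step is entirely under our control and independent of the history, and the problem reduces to prescribing a binary word $(c_n)_{n\ge 1}\in\{0,1\}^{\mathbb N}$, where $c_n=0$ records a ``doubling'' step, whose zeros have asymptotic density exactly $p$.

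Concretely, I would fix $a_0=2$ and, given such a word, set $a_n=2a_{n-1}$ when $c_n=0$ and $a_n=2a_{n-1}-1$ when $c_n=1$. An easy induction gives $a_{n-1}\ge 2$ for all $n\ge 1$ (indeed $a_n\ge 2a_{n-1}-1\ge 3$), so Proposition \ref{pp1} applies at every step and yields $\Gamma(a_{n-1},a_n)=c_n$. Hence $\#\{n\le N:\Gamma(a_{n-1},a_n)=0\}=\#\{n\le N:c_n=0\}$, and the corollary reduces to the purely combinatorial statement that for each $p\in[0,1]$ there is a $0/1$ word whose length-$N$ prefix contains $\sim pN$ zeros.

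To produce such a word for an arbitrary $p\in[0,1]$, I would use the Beatty-type prescription $c_n=0$ if and only if $\lfloor np\rfloor>\lfloor (n-1)p\rfloor$. Telescoping gives $\#\{n\le N:c_n=0\}=\lfloor Np\rfloor-\lfloor 0\rfloor=\lfloor Np\rfloor$, so the frequency is $\lfloor Np\rfloor/N\to p$; the degenerate cases $p=0$ and $p=1$ just give the constant words $c_n\equiv 1$ and $c_n\equiv 0$. (For rational $p=s/t$ one could instead repeat a fixed block of $t$ steps containing $s$ doubling moves, which makes the density exactly $p$ for every $N$ that is a multiple of $t$.)

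There is no serious obstacle here; the only points needing care are the bookkeeping that keeps every $a_{n-1}\ge 2$ so that the ``$2a-1$'' half of Proposition \ref{pp1} remains available, and the observation --- immediate from Proposition \ref{pp1} (or Theorem \ref{mt}), since $\Gamma(a,2a)$ and $\Gamma(a,2a-1)$ depend only on the pair and not on how $a$ was reached --- that concatenating these moves creates no unforeseen interaction between consecutive steps. If one wanted a sharper statement, the same construction with the block pattern shows that for rational $p$ the convergence can be made to hold with an $O(1/N)$ error term.
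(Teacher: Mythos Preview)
Your argument is correct and rests on the same engine as the paper --- Proposition~\ref{pp1} lets you force $\Gamma(a_{n-1},a_n)$ to be either $0$ or $1$ at will via $a_n=2a_{n-1}$ or $a_n=2a_{n-1}-1$ --- but the scheduling of those moves is different. The paper chooses the next move \emph{adaptively}: if the running ratio $b_{n-1}$ is below $p$ it doubles, otherwise it uses $2a_{n-1}-1$, and then spends most of the proof on a case analysis showing $|b_N-p|<1/N$ once $N$ is past the first crossing. Your Beatty prescription $c_n=0\iff\lfloor np\rfloor>\lfloor(n-1)p\rfloor$ is non-adaptive and makes the counting trivial, since the telescoping identity gives exactly $\lfloor Np\rfloor$ zeros in the first $N$ steps and hence $|b_N-p|\le 1/N$ for every $N$; this also absorbs the endpoint cases $p\in\{0,1\}$ without separate treatment. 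The net effect is the same $O(1/N)$ convergence, but your route is shorter and avoids the case analysis entirely.
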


\begin{proof}
Let $p \in [0,1]$. Thanks to Proposition \ref{pp1}, if $p = 1$, we choose $(a_n)_{n=0}^\infty$ to be $(2^n)_{n=0}^\infty$. If $p = 0$, we choose $(a_n)_{n=0}^\infty$ to be $(2^n+1)_{n=0}^\infty$ because then for every $n\ge 1$,
$$a_{n}  \ =\ 2^n+1 \ =\ 2(2^{n-1}+1)-1 \ =\ 2a_{n-1}-1.$$

Assume that $p\in (0,1)$. Let $a_0 = 1$ and $a_1 = 2$. For $n\ge 2$, we construct $a_n$ inductively as follows: 
$$a_n\ =\ \begin{cases} 2a_{n-1}, &\mbox{ if } b_{n-1}:=\frac{\#\{j\le n-1\,:\, \Gamma(a_{j-1},a_j) = 0\}}{n-1} < p;\\  2a_{n-1}-1, &\mbox{ otherwise}.\end{cases}$$
Here we follow the greedy algorithm where the next term $a_n$ is chosen based on whether the ratio of interest $b_{n-1}$ has exceeded $p$. If $b_{n-1}\ge p$, then the pair $(a_{n-1}, a_{n})$ satisfies $\Gamma(a_{n-1}, a_n) = 1$ so that $b_n < b_{n-1}$; otherwise, $\Gamma(a_{n-1}, a_n) = 0$, making $b_n > b_{n-1}$.

Let us take a closer look at the sequence
$$\left(b_N\ :=\ \frac{\#\{n\le N\,:\, \Gamma(a_{n-1},a_n) = 0\}}{N}\right)_{N=1}^\infty.$$
By construction, there are infinitely many $N$'s with $b_N < p$; similarly, there are infinitely many $N$'s with $b_N \ge p$. Let $(N_j)_{j=1}^\infty$ be the subsequence of $\mathbb{N}_{\ge 2}$ such that for each $N\in (N_j)_{j=1}^\infty$, either
\begin{itemize}
    \item $b_{N-1} < p$ and $b_{N}\ge p$, or
    \item $b_{N-1} \ge p$ and $b_{N} < p$. 
\end{itemize}
In other words, $N_j$ is the $j^{\sf{th}}$ index for when the ratio, $(b_N)_{N=1}^\infty$, changes from being below $p$ to above or at $p$ or vice versa.

To show that $b_N\rightarrow p$, pick $\varepsilon > 0$. Choose $N_m$ with $1/N_m < \varepsilon$. We claim that for all $N\ge N_m$, $|b_N - p| < \varepsilon$. We proceed by case analysis.\ \\

Case 1: $N = N_j$ for some $j\ge m$ with $b_{N_j}\ge p$. We have
\begin{align*}
0\ \le\ b_{N_j} - p &\ =\ \frac{\#\{n\le N_j\,:\, \Gamma(a_{n-1},a_n) = 0\}-pN_j}{N_j}\\
&\ \le\ \frac{\#\{n\le N_j-1\,:\, \Gamma(a_{n-1},a_n) = 0\} + 1-p(N_j-1)}{N_j}\\
&\ =\ \frac{(b_{N_j-1}-p)(N_j-1)+1}{N_j}\ <\ \frac{1}{N_j}\ <\ \varepsilon.
\end{align*}
\\ \

Case 2: $N = N_j$ for some $j\ge m$ with $b_{N_j} < p$. We have
\begin{align*}
0 \ <\ p - b_{N_j} &\ =\ \frac{pN_j- \#\{n\le N_j\,:\, \Gamma(a_{n-1},a_n) = 0\}}{N_j}\\
&\ \le\ \frac{p(N_j-1) - \#\{n\le N_j-1\,:\, \Gamma(a_{n-1},a_n) = 0\} + p}{N_j}\\
&\ =\ \frac{(p-b_{N_j-1})(N_j-1)+p}{N_j}\ <\ \frac{1}{N_j}\ <\ \varepsilon.
\end{align*}
\\ \

Case 3: $N_j < N < N_{j+1}$ for some $j\ge m$. 
\begin{enumerate}
    \item Case 3.1: If $b_{N_j}\ge p$, then $b_N \ge p$ and  $b_{N_j} > b_N$. It follows from Case 1 that
    $$0\ \le\ b_N - p\ <\ b_{N_j}-p\ <\ \varepsilon.$$
    \item Case 3.2: If $b_{N_j} < p$, then $b_N < p$ and $b_N > b_{N_j}$. It follows from Case 2 that
    $$0\ <\ p-b_N \ <\ p-b_{N_j}\ <\ \varepsilon.$$
\end{enumerate}

\mbox{ }\\ \
We have shown that for all $N\ge N_m$, $|b_N - p| < \varepsilon$ and thus, $b_N\rightarrow p$.
\end{proof}

\begin{rek}\label{rr1}
For every $p\in [0,1]$, the sequence $(a_n)_{n=0}^\infty$ constructed in the proof of Corollary \ref{cc1} satisfies 
$$a_n\ \le\ 2^{n+1}, \mbox{ for all }n\ge 0.$$
On the other hand, since either $a_n = 2a_{n-1}$ or $a_n = 2a_{n-1}-1$, we have
$$a_n\ \ge\ (a_1-1)2^{n-1} + 1\ >\ 2^{n-1}, \mbox{ for all }n\ge 1.$$
This follows from simple algebra:
\begin{align*}
a_n & \ \ge \  2 a_{n-1} - 1 \nonumber\\
&\ \ge \ 2^2 a_{n-2} - 1 - 2 \nonumber\\
& \ \ge \ 2^3 a_{n-3} - 1 - 2 - 2^2 \nonumber\\
& \mbox{ }\mbox{ }\mbox{ } \vdots\  \nonumber\\
& \ \ge\   2^{n-1} a_1 - 1 - 2 - \cdots - 2^{n-2} \nonumber\\
&\ = \ 2^{n-1} a_1 - 2^{n-1} + 1 \ = \ 2^{n-1} (a_1 - 1) + 1 \ > \ 2^{n-1}. \nonumber
\end{align*}
Hence, $(a_n)_{n=0}^\infty$ grows in the order of $2^n$.
\end{rek}

%%%%%%%%%%%%%%%%%%%%%%%%%%%%%%%%%%%%%%%%%%%%%
%%%%%%%%%%%%%%%%%%%%%%%%%%%%%%%%%%%%%%%%%%%%%%
\subsection{Periodicity of $(\Gamma(k, a_n))_{n=1}^\infty$}

For a fixed $k\in \mathbb{N}$ and a sequence of positive integers $(a_n)_{n=1}^\infty$, let $T_k((a_n)_{n=1}^\infty)$ denote the eventual period (if it exists) of the sequence $(\Gamma(k, a_n))_{n=1}^\infty$. Trivially, if $k = 1$, we have a constant sequence of $0$'s with period $1$. Arachchi et al.\ \cite{ACLLMM} used Theorem \ref{mt} to compute $T_{k}(\mathbb{N})$.

\begin{thm}\cite[Theorem 1.8]{ACLLMM}\label{ThmNats} Fix $k\in \mathbb{N}$. The following hold.
\begin{enumerate}
    \item If $k$ is odd, $T_k(\mathbb{N}) = k$; furthermore, in each period, the number of $0$'s is one more than the number of $1$'s.
    \item If $k$ is even, $T_k(\mathbb{N}) = 2k$; furthermore, in each period, the number of $0$'s is two more than the number of $1$'s.
\end{enumerate}
\end{thm}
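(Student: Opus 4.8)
The plan is to read $\Gamma(k,n)$ off Theorem~\ref{mt} as an explicit function of $n$, establish that this function is periodic, and then count zeros and ones inside one period. Write $g=\gcd(k,n)$, $c=k/g$, $j=n/g$, so that $\gcd(c,j)=1$ and $\Gamma(k,n)$ depends only on the pair $(c,j)$; note that $g$, hence $c$, depends only on $n\bmod k$. By Theorem~\ref{mt}: if $c=1$ or $j=1$ then $\Gamma(k,n)=0$; if $c\ge 2$ is odd then $\Gamma(k,n)=0$ exactly when the representative in $[1,c-1]$ of $j^{-1}\bmod c$ is odd, which depends only on $j\bmod c$, hence on $n\bmod k$; and if $c$ is even (possible only for even $k$, which forces $j$ odd) and $j\ge 3$ then $\Gamma(k,n)=0$ exactly when $\Theta:=(c^{-1}\bmod j)\in[1,j-1]$ is odd. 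For this last case I would write $c\Theta=1+jt$; since $\Theta\le j-1$ one checks $t\in\{1,\dots,c-1\}$, and since $c$ is even $t$ is odd; moreover $jt\equiv -1\pmod c$, so $t$ is the representative in $[1,c-1]$ of $-j^{-1}\bmod c$ and depends only on $j\bmod c$. Reducing $c\Theta=1+jt$ modulo $2c$ then gives $\Theta$ odd $\iff jt\equiv c-1\pmod{2c}$, which depends only on $j\bmod 2c$. Together with a quick check that the degenerate cases $c=1$, $j=1$, $k\mid n$ agree with the generic formulas, this shows $\Gamma(k,n)$ is a function of $n\bmod k$ when $k$ is odd and of $n\bmod 2k$ when $k$ is even; in particular $(\Gamma(k,n))_{n\ge 1}$ is periodic with period dividing $k$, respectively $2k$.

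Next I would count the excess of $0$'s over $1$'s in one window of length $k$ (odd case) or $2k$ (even case) by stratifying according to $g=\gcd(k,n)$. Fix a divisor $g\mid k$ and set $c=k/g$; the stratum consists of $n=gj$ with $j$ running over the integers coprime to $c$ in $[1,k/g]$ (odd case) or $[1,2k/g]$ (even case), and there $\Gamma(k,n)=\Gamma(c,j)$. The stratum $g=k$ (so $c=1$) contributes one $0$ in the odd case (just $n=k$) and two $0$'s in the even case ($j\in\{1,2\}$). A stratum with $c\ge 3$ odd contributes a net $0$: the map $x\mapsto c-x$ is a parity-reversing involution on the units mod $c$, so exactly $\phi(c)/2$ of them have odd representative, and $j$ runs through one full set of units in the odd case ($\phi(c)/2$ zeros and $\phi(c)/2$ ones) and two in the even case ($\phi(c)$ of each). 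A stratum with $c$ even occurs only in the $2k$-window; pairing $j\in[1,2c]$ coprime to $c$ as $\{r,r+c\}$ with $r$ a unit mod $c$, the congruence $jt\equiv -1\pmod c$ forces $jt\bmod 2c\in\{c-1,2c-1\}$, and since $t$ is odd, passing from $r$ to $r+c$ toggles between these two values, so each pair contributes exactly one $0$ and one $1$ — net $0$. Summing over strata, the $0$-excess in one period is $1$ when $k$ is odd and $2$ when $k$ is even, which also establishes the ``one more'' and ``two more'' assertions.

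Finally I would pin down the exact period. Since the period $P$ divides $k$ (odd case) or $2k$ (even case), a $k$-window resp.\ $2k$-window is a union of $k/P$ resp.\ $2k/P$ full periods, so the computed $0$-excess gives $(k/P)\delta_P=1$ in the odd case, forcing $P=k$, and $(2k/P)\delta_P=2$ in the even case, forcing $P\in\{k,2k\}$, where $\delta_P\ge 1$ denotes the per-period excess. To exclude $P=k$ for even $k$ I would note that $\Gamma(k,k+1)=1$, since $k^{-1}\equiv k\pmod{k+1}$ is even, whereas $\Gamma(k,2k+1)=\Gamma(k,1)=0$ by $1\mid k$ and periodicity (this also matches Proposition~\ref{pp1} in spirit), so two indices $k$ apart carry different values; hence $P=2k$. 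The step demanding the most care is the even-$c$ part of the periodicity argument: because $c^{-1}$ is taken modulo the unbounded quantity $j$, periodicity in $j\bmod 2c$ is not visible a priori, and it is the bounded ``carry'' $t$ in $c\Theta=1+jt$ — which one pins down modulo $2c$ from $jt\equiv -1\pmod c$ and $t\in[1,c-1]$ — that makes both the periodicity and the matching count go through.
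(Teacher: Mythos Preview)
The paper does not prove Theorem~\ref{ThmNats}; it is quoted from \cite{ACLLMM}, and only the two reflection facts in Remark~\ref{ResNats} are recorded from the original argument. Judged on its own terms, your proof is correct: the periodicity reduction via Theorem~\ref{mt} is sound (the delicate even-$c$ case is handled well by passing to the bounded carry $t$ in $c\Theta=1+jt$), the stratified count is right, and the divisibility argument $(k/P)\delta_P=1$ resp.\ $(2k/P)\delta_P=2$ together with $\Gamma(k,k+1)\neq\Gamma(k,2k+1)$ correctly pins down the exact period. One small point worth tightening is the assertion ``$\delta_P\ge 1$'': this is not an assumption but a consequence of $(k/P)\delta_P>0$, and you should say so.

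Compared with the approach signaled by Remark~\ref{ResNats}, your counting argument differs in the even-$c$ strata. The original proof evidently uses the \emph{reflection} pairing $s\leftrightarrow 2k-s$ (equivalently $j\leftrightarrow 2c-j$), showing directly that $\Gamma(k,s)\neq\Gamma(k,2k-s)$, from which both the zero/one balance and the half-period symmetry follow. You instead use the \emph{translation} pairing $j\leftrightarrow j+c$ and show that $jt\bmod 2c$ toggles between $c-1$ and $2c-1$. Both involutions give net zero on each even-$c$ stratum, but yours does not immediately yield the statement of Remark~\ref{ResNats}(2), which the paper relies on elsewhere (e.g.\ in Proposition~\ref{ThmOdds}); the reflection pairing gives that extra structural information for free.
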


We emphasize the key observations within the proof of Theorem \ref{ThmNats}.

\begin{rek}\label{ResNats}
Within Theorem \ref{ThmNats}, the following facts are shown:
\begin{enumerate}
    \item if $k$ is odd, then $\Gamma(k, s) \neq \Gamma(k, k - s)$ for all $1 \leq s \leq (k - 1)/2$;
    \item if $k$ is even, then $\Gamma(k, s) \neq \Gamma(k, 2k - s)$ for all $1 \leq s \leq k - 1$.
\end{enumerate}
Hence, once we know the first half of each period, we know the second half. 
\end{rek}

Let us consider $\Gamma(k, (a_n)_{n=1}^\infty)$ for some certain $k$ and arithmetic progressions $(a_n)_{n=1}^\infty$.

\begin{prop}\label{Prop:Arithwithrelprime}
    Define the arithmetic sequence $(a_n:=pn-r)_{n=1}^\infty$ for some $p\in \mathbb{N}$ and $r\in \mathbb{N}\cup\{0\}$ with $0\leq r<p$. Let $k\in \mathbb{N}$ be odd. If $\gcd(k,p)=1$, then $T_k((a_n)_{n=1}^\infty)=k$, and within each period, the number of 0's is one more than the number of 1's.
\end{prop}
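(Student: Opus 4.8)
The plan is to transport the already-known behaviour of $(\Gamma(k,n))_{n=1}^\infty$ from Theorem~\ref{ThmNats} along the substitution $n\mapsto pn-r$, which is a bijection of $\mathbb{Z}/k\mathbb{Z}$ precisely because $\gcd(k,p)=1$. The key preliminary remark is that $\Gamma(k,b)$ depends only on $b\bmod k$ for positive integers $b$. This is immediate from Theorem~\ref{ThmNats}(1): since $(\Gamma(k,n))_{n=1}^\infty$ has period $k$, we get $\Gamma(k,n)=\Gamma(k,m)$ whenever $n\equiv m\pmod k$ with $n,m\ge 1$. (One could also read it off Theorem~\ref{mt}: for odd $k$ one has $k/\gcd(k,b)$ odd, hence $\Gamma(k,b)=0$ if and only if $k\mid b$ or $\Theta(b,k)$ is odd, and both $\gcd(k,b)$ and the canonical representative $\Theta(b,k)$ are functions of $b\bmod k$ — but routing through Theorem~\ref{ThmNats} avoids having to track the varying gcd.) Since $a_n=pn-r\ge p-r\ge 1$ for every $n\ge 1$ and $a_{n+k}\equiv a_n\pmod k$, this already shows that $(\Gamma(k,a_n))_{n=1}^\infty$ is \emph{purely} periodic with period dividing $k$; write $\ell\mid k$ for its minimal period.

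Next I would identify the multiset of values in one period. As $\gcd(k,p)=1$, the affine map $n\mapsto pn-r$ permutes residues modulo $k$, so over the block $1\le n\le k$ the residues $a_n\bmod k$ hit each class of $\mathbb{Z}/k\mathbb{Z}$ exactly once. Combined with the preliminary remark, the multiset $\{\Gamma(k,a_n):1\le n\le k\}$ equals $\{\Gamma(k,1),\Gamma(k,2),\dots,\Gamma(k,k)\}$, which is one full period of $(\Gamma(k,n))_n$; by Theorem~\ref{ThmNats}(1) it contains one more $0$ than $1$, hence exactly $(k+1)/2$ zeros and $(k-1)/2$ ones. In particular, every length-$k$ window of $(\Gamma(k,a_n))_n$ contains exactly $(k+1)/2$ zeros.

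It remains to rule out $\ell<k$, which is the only step that is not pure transport. If $\ell<k$, a length-$k$ window is the concatenation of $k/\ell$ full $\ell$-periods, so the number of zeros in it equals $(k/\ell)\,z_\ell$, where $z_\ell$ is the fixed number of zeros in an $\ell$-period; comparing with the previous paragraph forces $(k/\ell)\mid (k+1)/2$. But $k/\ell$ divides $k$ and $\gcd(k,k+1)=1$, so $\gcd(k/\ell,(k+1)/2)=1$, whence $k/\ell=1$ — a contradiction. Therefore $\ell=k$, that is, $T_k((a_n)_{n=1}^\infty)=k$, and the count of the second paragraph yields the claimed surplus of one $0$ over one $1$ in each period. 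The ``main obstacle'' is really only this coprimality observation: periodicity and the $0/1$ count come for free from Theorem~\ref{ThmNats} via the residue bijection, and minimality of the period reduces to the fact that $(k+1)/2$, being coprime to $k$, cannot be distributed evenly over a proper divisor of $k$. (Sanity check: for $k=9$, $p=2$, $r=1$ the sequence $(\Gamma(9,2n-1))_n$ begins $0,0,1,1,0,0,0,1,1$, which has period $9$ with five $0$'s.)
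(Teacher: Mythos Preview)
Your proof is correct and follows essentially the same route as the paper: reduce $\Gamma(k,a_n)$ to $\Gamma(k,a_n\bmod k)$ via Theorem~\ref{ThmNats}, use $\gcd(k,p)=1$ to see that the residues $a_1,\dots,a_k$ permute $\mathbb{Z}/k\mathbb{Z}$ and hence inherit the $0/1$ count from one period of $(\Gamma(k,n))_n$, and then argue that the minimal period cannot be a proper divisor of $k$. The only cosmetic difference is in this last step: the paper observes that the difference (\#$0$'s $-$ \#$1$'s) over a $k$-window equals $1$, so writing it as $(a-b)(k/T)=1$ immediately forces $k/T=1$, whereas you use the coprimality of $(k+1)/2$ with $k$ to reach the same conclusion.
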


\begin{proof}
Since $\gcd(k,p)=1$, for each $n\in \{1,2,\dots, k\}$, there is a unique integer $x_n\in [1,k]$ with $a_{x_n}\equiv n\ \mod k$. Let $(\ell_n)_{n=1}^k$ be nonnegative integers such that
$$a_{x_n}\ =\ \ell_n k + n, \quad 1\le n\le k.$$
By Theorem \ref{ThmNats}, for  $1\leq n\leq (k-1)/2$,
$$
    \Gamma(k,a_{x_n})\ =\ \Gamma(k,\ell_nk+n)\ =\ \Gamma(k,n)
$$
and
$$
\Gamma(k,a_{x_{k-n}})\ =\ \Gamma(k, \ell_{k-n}k + (k-n))\ =\ \Gamma(k, k-n).
$$
According to Remark \ref{ResNats} item (1), $\Gamma(k,n)\neq \Gamma(k, k-n)$, so
$$\Gamma(k, a_{x_n})\ \neq\ \Gamma(k, a_{x_{k-n}}).$$
Together with the fact that 
$$\Gamma(k,a_{x_k})\ =\ \Gamma(k, \ell_k k+k)\ =\ 0,$$
we conclude that within the first $k$ terms of $(\Gamma(k, a_n))_{n= 1}^\infty$, the number of 0’s is one more than the number of 1’s.

It remains to show that $T:=T_k((a_n)_{n=1}^\infty) = k$. By Theorem \ref{ThmNats}, we have
$$\Gamma(k, a_{n+k})\ =\ \Gamma(k, p(n+k)-r)\ =\ \Gamma(k, a_n + kp)\ = \ \Gamma(k, a_n),$$
so $T$ divides $k$. Hence, within the first $k$ terms, there are $k/T$ copies of the period. Let $a$ and $b$ be the number of 0’s and 1’s within each period, respectively. Then $(a - b)(k/T) = 1$, which implies that $a - b = k/T = 1$. Therefore, $T = k$, as desired.
\end{proof}

\begin{prop}{\label{ThmOdds}}
For every $k \in \mathbb{N}$, $T_{2^k}((2n-1)_{n=1}^\infty)=2^k$. Furthermore, in each period, there are $2^{k-1}$ many 0's and $2^{k-1}$ many 1's.
\end{prop}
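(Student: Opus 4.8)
The plan is to turn $\Gamma(2^k,2n-1)$ into a parity question about a modular inverse using Theorem~\ref{mt}, and then to resolve that parity by passing from modulus $2^k$ to modulus $2^{k+1}$. Since $2n-1$ is odd we have $\gcd(2^k,2n-1)=1$, and since $k\ge 1$ the quotient $2^k/\gcd(2^k,2n-1)=2^k$ is even, so for $2n-1\ge 3$ part (b) of Theorem~\ref{mt} says that $\Gamma(2^k,2n-1)=0$ exactly when $\theta:=(2^k)^{-1}\bmod(2n-1)\in[1,2n-2]$ is odd. The degenerate case $n=1$ (where $2n-1=1$ divides $2^k$, so $\Gamma(2^k,1)=0$) will be shown to be consistent with the final criterion and folded in at the end.

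To pin down the parity of $\theta$, write $2^k\theta-1=j(2n-1)$ with $j\in\mathbb{Z}$. Because $1\le\theta\le 2n-2$ and $k\ge1$, the left-hand side is a positive multiple of $2n-1$ strictly smaller than $2^k(2n-1)$, so $1\le j\le 2^k-1$. Reducing the identity $2^k\theta=1+j(2n-1)$ modulo $2^{k+1}$ shows $\theta$ is odd precisely when $j(2n-1)\equiv 2^k-1\pmod{2^{k+1}}$ and even precisely when $j(2n-1)\equiv-1\pmod{2^{k+1}}$; reducing it modulo $2^k$ identifies $j$ as the unique element of $[1,2^k-1]$ congruent to $-(2n-1)^{-1}$ modulo $2^k$. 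Setting $J:=-(2n-1)^{-1}\bmod 2^{k+1}\in[1,2^{k+1}-1]$, we get $J\in\{j,\,j+2^k\}$, and using $2^k(2n-1)\equiv 2^k\pmod{2^{k+1}}$ a one-line computation in each case shows $\theta$ is odd if and only if $J=j+2^k$, i.e.\ if and only if $J\ge 2^k$, i.e.\ if and only if $(2n-1)^{-1}\bmod 2^{k+1}$ lies in $\{1,3,5,\dots,2^k-1\}$. Since for $n=1$ the inverse of $1$ modulo $2^{k+1}$ is $1$, which lies in that set, the criterion
$$\Gamma(2^k,2n-1)=0\iff (2n-1)^{-1}\bmod 2^{k+1}\in\{1,3,\dots,2^k-1\}$$
holds for all $n\ge1$.

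The two assertions then follow quickly. The criterion depends on $n$ only through $(2n-1)\bmod 2^{k+1}$, hence only through $n\bmod 2^k$, so $(\Gamma(2^k,2n-1))_{n\ge1}$ is purely periodic with period dividing $2^k$. As $n$ runs over a full period, $2n-1$ runs over all $2^k$ odd residues modulo $2^{k+1}$; since $m\mapsto m^{-1}$ is a bijection of the group $(\mathbb{Z}/2^{k+1}\mathbb{Z})^\times$ of odd residues and exactly $2^{k-1}$ of those residues lie in $\{1,3,\dots,2^k-1\}$, exactly $2^{k-1}$ values of $n$ in a period give $\Gamma=0$ and the other $2^{k-1}$ give $\Gamma=1$. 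Finally the period is exactly $2^k$: any proper divisor of $2^k$ divides $2^{k-1}$, which would force $\Gamma(2^k,2\cdot1-1)=\Gamma(2^k,2(1+2^{k-1})-1)$; but the left side is $0$, while $2^k+1=2(1+2^{k-1})-1$ satisfies $(2^k+1)^2\equiv1\pmod{2^{k+1}}$, so $(2^k+1)^{-1}\equiv 2^k+1\pmod{2^{k+1}}$ lies outside $\{1,\dots,2^k-1\}$ and the right side is $1$, a contradiction.

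The step I expect to be the crux is the parity analysis in the second paragraph: the inverse $\theta$ has no uniform size or shape as $n$ varies, so reasoning about it directly is awkward. The key move is to replace $\theta$ by the bounded cofactor $j\in[1,2^k-1]$ and to carry one extra $2$-adic digit --- that is, to work modulo $2^{k+1}$ rather than modulo $2^k$ --- after which the bijectivity of inversion on $(\mathbb{Z}/2^{k+1}\mathbb{Z})^\times$ makes the counting and the sharpness of the period essentially automatic.
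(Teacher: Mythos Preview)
Your proof is correct, but it takes a genuinely different route from the paper's. The paper never unpacks Theorem~\ref{mt} here; instead it leans entirely on the structural facts already established in Theorem~\ref{ThmNats} and Remark~\ref{ResNats}. Concretely, the paper exhibits an explicit solution to show $\Gamma(2^k,2^k-1)=0=\Gamma(2^k,1)$, uses the $2k$-periodicity of $\Gamma(k,\cdot)$ from Theorem~\ref{ThmNats} to get $T\mid 2^k$, and then derives both the exact period and the $2^{k-1}/2^{k-1}$ split directly from the pairing $\Gamma(2^k,s)\neq\Gamma(2^k,2^{k+1}-s)$ of Remark~\ref{ResNats}(2): this pairing matches the odd residues in $[1,2^k-1]$ with those in $[2^k+1,2^{k+1}-1]$ to give the count, and applied at $s=2^k-1$ it yields $\Gamma(2^k,1)\neq\Gamma(2^k,2^k+1)$, killing any smaller period. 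Your argument instead goes back to Theorem~\ref{mt}, converts the problem to the parity of $(2^k)^{-1}\bmod(2n-1)$, and with a nice $2$-adic lift produces the explicit criterion $\Gamma(2^k,2n-1)=0\iff (2n-1)^{-1}\bmod 2^{k+1}\in\{1,3,\dots,2^k-1\}$, from which periodicity, the count, and sharpness all fall out via the bijectivity of inversion on $(\mathbb{Z}/2^{k+1}\mathbb{Z})^\times$. The paper's approach is shorter and more structural, reusing prior results; yours is self-contained from Theorem~\ref{mt} and yields strictly more information, namely a closed-form description of exactly which $n$ give $\Gamma=0$.
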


\begin{proof}
Fix $k\in \mathbb{N}$. We have 
$$2^k \cdot 0 + (2^k-1)\cdot (2^{k-1}-1)\ =\ \frac{(2^k-1)(2^k-2)}{2},$$
so 
\begin{equation}\label{er1}\Gamma(2^k, 2^{k}-1)\ =\ 0\ =\ \Gamma(2^k, 1).\end{equation} Let $T:=T_{2^k}((2n-1)_{n= 1}^\infty)$. It is easy to see that 
$T$ divides $2^k$. Indeed, for every $n\in \mathbb{N}$, Theorem \ref{ThmNats} gives
$$\Gamma(2^k, 2(n+2^k)-1)\ =\ \Gamma(2^k, (2n-1)+2^{k+1})\ =\ \Gamma(2^k, 2n-1).$$
Suppose, for contradiction, that $2^k=\ell T$ for some even $\ell\ge 2$. By \eqref{er1} and Remark \ref{ResNats} item (2),
\begin{align*}\Gamma(2^k,1)\ =\ \Gamma(2^k,2^k-1)\ \neq\ \Gamma(2^k,2^{k+1}-(2^k-1))&\ =\ \Gamma(2^k,2^k+1)\\
&\ =\ \Gamma\left(2^k,2\left(1+\frac{\ell}{2}T\right)-1\right).
\end{align*}
Hence, 
$$\Gamma(2^k, 2\cdot 1 - 1)\ \neq\ \Gamma\left(2^k,2\left(1+\frac{\ell}{2}T\right)-1\right),$$
which contradicts that $T$ is the period of $\Gamma(2^k, (2n-1)_{n=1}^\infty)$.

The second statement follows immediately from Remark \ref{ResNats} item (2).
\end{proof}

Propositions \ref{Prop:Arithwithrelprime} and \ref{ThmOdds} motivate the following problem. 

\begin{prob}\label{Prob:Arithperiod}
For a general arithmetic progression $a_n=pn-r$ with $0\leq r<p$ and $k\geq 1$, compute $T_k((a_n)_{n=1}^\infty)$.
\end{prob}

For more complex sequences, another direction is to study the possible eventual periodic behavior of $(\Gamma(k,a_n))_{n=1}^\infty$.

\begin{prob}\label{perpr}
 For what sequences $(a_n)_{n=1}^\infty$, is $(\Gamma(k, a_n))_{n=1}^\infty$ eventually periodic for all $k\in\mathbb{N}$? For such sequences $(a_n)_{n=1}^\infty$, how long does it take for $(\Gamma(k, a_n))_{n=1}^\infty$ to be periodic?
\end{prob}

\begin{thm}\label{nt}
If $(a_n)_{n=1}^\infty$ is a sequence of positive integers, and there exist $(c_1, \ldots, c_s)\in \mathbb{Z}^s$ and $(t_1,\ldots, t_s)\in \mathbb{Z}^s_{\ge 0}$ such that for sufficiently large $n$, $(a_n)_{n=1}^\infty$ is increasing, and
\begin{equation}\label{e13}a_n \ =\ c_1a^{t_1}_{n-1} + c_2a^{t_2}_{n-2} + \cdots + c_sa^{t_s}_{n-s},\end{equation}
then $(\Gamma(k, a_n))_{n=1}^\infty$ is eventually periodic for every $k\in \mathbb{N}$. In particular, if we define $\pi(k)$ to be the (eventual) period of $(a_n \mod k)_{n=1}^\infty$, then $T_k((a_n)_{n=1}^\infty)$ divides $\pi(2k)$.
\end{thm}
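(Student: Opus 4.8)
The plan is to show that $\Gamma(k, a_n)$ depends only on the residue class $a_n \bmod 2k$ (for $n$ large enough that $a_n$ is increasing and exceeds any needed threshold), and then invoke periodicity of $(a_n \bmod 2k)_{n=1}^\infty$, which the recurrence \eqref{e13} guarantees. For the first part I would appeal to Theorem \ref{mt} together with Remark \ref{ResNats}: the value $\Gamma(k, b)$ is determined by the parity of a multiplicative inverse modulo $k$ or modulo $k/\gcd(k,b)$, and Remark \ref{ResNats} shows that passing from $b$ to $b + 2k$ (item (2) for even $k$, and item (1) iterated / the period-$k$ statement of Theorem \ref{ThmNats} for odd $k$) leaves $\Gamma(k,\cdot)$ unchanged. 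So the honest content is: \emph{if $b \equiv b' \pmod{2k}$ and both are positive, then $\Gamma(k,b) = \Gamma(k,b')$}. Once this is in hand, if $a_m \equiv a_{m'} \pmod{2k}$ with $m, m'$ both in the ``eventually increasing'' range, then $\Gamma(k, a_m) = \Gamma(k, a_{m'})$, so the period of $(\Gamma(k,a_n))_{n=1}^\infty$ divides the period $\pi(2k)$ of $(a_n \bmod 2k)_{n=1}^\infty$.

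The key steps, in order, are: (1) Prove the reduction lemma $\Gamma(k,b) = \Gamma(k, b + 2k)$ for positive $b$. For $k$ odd this is essentially Theorem \ref{ThmNats}(1) (period $k$, hence also period $2k$) applied with the understanding that $\Gamma(k, \cdot)$ on positive integers only sees $b \bmod k$ in the coprime case, plus handling the $\gcd(k,b) > 1$ case via the definition of $\Gamma$ through $k/\gcd(k,b)$ and $b/\gcd(k,b)$; for $k$ even it is Theorem \ref{ThmNats}(2) (period $2k$). Alternatively, I would give a direct argument from Theorem \ref{mt}: write $d = \gcd(k,b) = \gcd(k, b+2k)$, note $k/d$ and $b/d$ versus $(b+2k)/d = b/d + 2k/d$, and observe $b/d \equiv (b+2k)/d \pmod{k/d}$, so the relevant inverses $\Theta$ agree; one only needs that the parity condition is insensitive to the shift, which follows because $2k/d$ contributes an even multiple of $k/d$ when $k/d$ is odd, and when $k/d$ is even the shift $2k/d$ is $\equiv 0 \pmod{2 \cdot (k/d)}$ after dividing appropriately — this is exactly the content of Remark \ref{ResNats}. (2) Establish that $(a_n \bmod 2k)_{n=1}^\infty$ is eventually periodic: since $a_n$ satisfies \eqref{e13} for large $n$, the vector $(a_{n-1} \bmod 2k, \ldots, a_{n-s} \bmod 2k)$ evolves by a fixed function on the finite set $(\mathbb{Z}/2k\mathbb{Z})^s$, hence is eventually periodic, with eventual period $\pi(2k)$ by definition. (3) Combine: for $n$ large, $a_{n + \pi(2k)} \equiv a_n \pmod{2k}$ and both lie in the increasing range, so by step (1), $\Gamma(k, a_{n+\pi(2k)}) = \Gamma(k, a_n)$; hence $(\Gamma(k,a_n))$ is eventually periodic with period dividing $\pi(2k)$.

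I would remark that the hypothesis ``$(a_n)$ is eventually increasing'' is used only to ensure we may apply the $\Gamma$-machinery uniformly (in particular that the $a_n$ are eventually large enough that no small-value edge cases of Theorem \ref{mt}, such as $a \mid b$, interfere, and — if one uses the formulation of $\Gamma$ via Theorem \ref{ThmNats} — that we are genuinely in the regime those theorems describe); monotonicity itself is not essential beyond guaranteeing $a_n \to \infty$, so one could weaken it to ``$a_n \to \infty$.''

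\textbf{Main obstacle.} The delicate point is step (1): carefully checking that the shift $b \mapsto b + 2k$ preserves $\Gamma(k, \cdot)$ across \emph{all} cases of Theorem \ref{mt} — in particular when $\gcd(k, b)$ is nontrivial and varies, and when $k$ is even versus odd — rather than only in the coprime case covered cleanly by Theorem \ref{ThmNats}. I expect the cleanest route is to reduce directly to Remark \ref{ResNats} by writing everything in terms of $d = \gcd(k,b)$, $k' = k/d$, $b' = b/d$, and tracking the parity of $\Theta(k', b')$ or $\Theta(b', k')$ under $b' \mapsto b' + 2k'/\!\!\gcd(\cdot)$; verifying $\gcd(k, b+2k) = \gcd(k,b)$ is immediate, and the inverse modulo $k'$ is unchanged since $b' + 2k' \equiv b' \pmod{k'}$, leaving only the even-$k'$ parity subtlety, which is precisely Remark \ref{ResNats}(2). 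This is routine but must be done with care.
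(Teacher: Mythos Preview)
Your proposal is correct, and the structure (eventual periodicity of $(a_n \bmod 2k)$ via pigeonhole on $s$-tuples, then transfer to $\Gamma$) matches the paper's. The difference is in how you justify the transfer step $\Gamma(k,a_n)=\Gamma(k,a_{n+\pi(2k)})$. The paper does not go through Theorem~\ref{mt} or Remark~\ref{ResNats}; instead it proves a small standalone lemma (Lemma~\ref{l1}): if $\gcd(a,b)=1$, $(x^*,y^*)$ is the nonnegative solution for $(a,b)$, and $N\ge 0$ satisfies $\gcd(a,b+N)=1$ and $a\mid N\big(\tfrac{a-1}{2}-y^*\big)$, then $\Gamma(a,b+N)=\Gamma(a,b)$, simply by checking that $(x',y^*)$ with $ax' = ax^* + N(\tfrac{a-1}{2}-y^*)$ is again a nonnegative integral solution. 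Applying this with $a=k/d$, $b=a_n/d$, $N=(a_{n+\pi(2k)}-a_n)/d=2kj/d$ gives $N(\tfrac{a-1}{2}-y^*)=\tfrac{k}{d}\cdot j\cdot(\tfrac{k}{d}-1-2y^*)$, visibly divisible by $a=k/d$, and the monotonicity ensures $N\ge 0$.

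Your route---reducing to ``$\Gamma(k,b)=\Gamma(k,b+2k)$ for all $b$''---also works, and in fact the obstacle you flag is lighter than you suggest: Theorem~\ref{ThmNats} already asserts that $(\Gamma(k,n))_{n\ge 1}$ has (eventual) period dividing $2k$ for \emph{all} $n$, coprime to $k$ or not, so no separate $\gcd>1$ analysis via Theorem~\ref{mt} is needed; combined with $a_n\to\infty$ this finishes step~(1) immediately. The trade-off is that your argument leans on the cited result Theorem~\ref{ThmNats}, whereas the paper's Lemma~\ref{l1} is a two-line self-contained computation that sidesteps parity-of-inverse bookkeeping entirely and makes the role of the factor $2$ in $\pi(2k)$ transparent.
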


While Theorem \ref{nt} gives a large class of sequences $(a_n)_{n=1}^\infty$ with eventually periodic $(\Gamma(k, a_n))_{n=1}^\infty$, there are other sequences that answer Problem \ref{perpr}. For example, the sequence $a_n= (n!)^{n!}$ does not satisfy any relation like \eqref{e13} but $(\Gamma(k, a_n))_{n=1}^\infty$ is eventually $0$. To see this, it suffices to verify that for each fixed $(s, t)\in \mathbb{N}^2$,
$$\lim_{n\rightarrow\infty} \frac{(n+s)!^{(n+s)!}}{(n!)^{t\cdot n!}}\ =\ \infty.$$
Indeed, for $n\ge 2$, we have
$$\frac{(n+s)!^{(n+s)!}}{(n!)^{t\cdot n!}} \ >\ (n!)^{(n+s)! - t\cdot n!}\ \ge\ 2^{n!(n+1 - t)}\rightarrow\infty.$$

To prove Theorem \ref{nt}, we need the following simple lemma.

\begin{lem}\label{l1}
    Let $a,b\in \mathbb{N}$ with $\gcd(a,b) = 1$. Let $x^*$ and $y^*$ be the nonnegative integers such that 
    $\delta + ax^*+by^* = (a-1)(b-1)/2$, where $\delta = \Gamma(a,b)$. If $N$ is a nonnegative integer such that $\gcd(a, b+N) = 1$ and $N((a-1)/2-y^*)$ is an integer divisible by $a$, then $\Gamma(a, b+N) = \Gamma(a,b)$.
\end{lem}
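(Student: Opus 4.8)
The plan is to write down, for the pair $(a, b+N)$ and the split index $\delta := \Gamma(a,b)$, an explicit nonnegative integral solution of $\delta + ax + (b+N)y = \frac{(a-1)((b+N)-1)}{2}$, and then appeal to Theorem~\ref{m1} to conclude $\Gamma(a, b+N) = \delta = \Gamma(a,b)$. We are handed nonnegative integers $x^*, y^*$ with $\delta + ax^* + by^* = \frac{(a-1)(b-1)}{2}$. The ansatz is to keep the second coordinate fixed: put $y' := y^*$ and let $x'$ be whatever the $(b+N)$-equation forces. Subtracting the two target equations, one finds $a(x' - x^*) = N\bigl(\tfrac{a-1}{2} - y^*\bigr)$, so that $x' = x^* + \tfrac{1}{a}N\bigl(\tfrac{a-1}{2} - y^*\bigr)$; the hypothesis that $N\bigl(\tfrac{a-1}{2} - y^*\bigr)$ is an integer divisible by $a$ is exactly the statement that this $x'$ is an integer. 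A one-line substitution then confirms $\delta + ax' + (b+N)y' = \frac{(a-1)((b+N)-1)}{2}$.

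It remains to check $x' \ge 0$ (we already have $y' = y^* \ge 0$). Here is the only real idea: $y^*$ is automatically at most $\tfrac{a-1}{2}$. Indeed, $by^* \le ax^* + by^* = \frac{(a-1)(b-1)}{2} - \delta \le \frac{(a-1)b}{2}$, and dividing by $b > 0$ gives $y^* \le \frac{a-1}{2}$. Hence $\tfrac{a-1}{2} - y^* \ge 0$, and since $N \ge 0$ we get $x' \ge x^* \ge 0$.

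With a nonnegative integral solution of $\delta + ax + (b+N)y = \frac{(a-1)((b+N)-1)}{2}$ in hand and $\gcd(a, b+N) = 1$, Theorem~\ref{m1} says exactly one of the two equations for the pair $(a, b+N)$ admits a nonnegative integral solution; since the $\delta$-equation does, $\Gamma(a, b+N) = \delta = \Gamma(a,b)$, as claimed.

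I do not anticipate a serious obstacle: once one commits to the ansatz $y' = y^*$, everything reduces to the a priori bound $y^* \le \frac{a-1}{2}$, which is immediate from the defining equation for $(a,b)$. The coprimality hypothesis is used only at the very end, to invoke Theorem~\ref{m1}; and the degenerate cases $a = 1$ or $b = 1$ (which force $x^* = y^* = 0$) need no separate argument, since the displayed inequalities still hold there.
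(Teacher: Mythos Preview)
Your proof is correct and follows essentially the same approach as the paper: keep $y' = y^*$, use the defining equation to bound $y^* \le (a-1)/2$ (the paper splits off $b=1$ separately, while your inequality chain handles it uniformly), and then read off that $x' = x^* + \tfrac{1}{a}N\bigl(\tfrac{a-1}{2}-y^*\bigr)$ is a nonnegative integer. Your explicit appeal to Theorem~\ref{m1} at the end is a nice touch that the paper leaves implicit.
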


\begin{proof}
    Since $\delta + ax^* + by^* = (a-1)(b-1)/2$, we have
    $(a-1)(b-1)/2\ge by^*$. If $b = 1$, then $y^* = 0$. If $b > 1$, then $(a-1)/2 \ge by^*/(b-1)\ge y^*$. In both cases, $(a-1)/2-y^*\ge 0$. Hence,
    $$T\ :=\ \left(\frac{(a-1)(b-1)}{2}-by^*-\delta\right) + N\left(\frac{a-1}{2}-y^*\right)\ \ge\ 0.$$
    The hypotheses also guarantee that $T$ is divisible by $a$. Therefore, there exists a nonnegative integer $x'$ with
    $$ax' \ =\ \left(\frac{(a-1)(b-1)}{2}-by^*-\delta\right) + N\left(\frac{a-1}{2}-y^*\right),$$
    which is equivalent to
    $$\delta + ax' + (b+N)y^*\ =\ \frac{(a-1)(b+N-1)}{2}.$$
\end{proof}

\begin{proof}[Proof of Theorem \ref{nt}]
    Let $p\in \mathbb{N}$ be such that $(a_n)_{n=p}^\infty$ is increasing, and \eqref{e13} holds for $n\ge p$. Let $k\in \mathbb{N}$ and 
    $$M_s \ :=\ \{(a_{n}, a_{n+1}, \ldots, a_{n+s-1})\,:\, n\ge p-s\}.$$
    Modulo $k$, there are $k^s$ configurations of elements in $M_s$. Since the set $M_s$ is infinite, there exist $n_1$ and $n_2$ with $p - s\le n_1 < n_2$ and 
    \begin{equation}\label{e12}(a_{n_1}, a_{n_1+1}, \ldots, a_{n_1+s-1}) \ \equiv\ (a_{n_2}, a_{n_2+1}, \ldots, a_{n_2+s-1})\mod k.\end{equation}
    We prove by induction that for every $n\ge n_1$,
    $$a_n\ \equiv\ a_{n+n_2-n_1}\mod k.$$
    By \eqref{e12}, $a_{n}\equiv a_{n+n_2-n_1}\mod k$ for $n_1\le n\le n_1+s-1$. Assume that there exists $j\ge n_1+s-1$ such that  $a_n \equiv a_{n+n_2-n_1}\mod k$ for all $n\in [n_1, j]$. The Recurrence \eqref{e13} guarantees that $a_{j+1}\equiv a_{j+1+n_2-n_1}\mod k$. We have shown that for every $k\in \mathbb{N}$, the sequence $(a_n\mod k)_{n=1}^\infty$ is eventually periodic. Let the period be denoted by $\pi(k)$.

    For some $m\in \mathbb{N}$, let $(a_n\mod 2k)_{n=m}^\infty$ be periodic with period $\pi(2k)$. Since for all $n\ge \max\{p, m\}$, $a_{n+\pi(2k)}-a_n$ is divisible by $2k$, we know that $\gcd(k, a_{n}) = \gcd(k, a_{n+\pi(2k)}) =: d$. Applying Lemma \ref{l1} with $a = k/d$, $b = a_n/d$, and $N = (a_{n+\pi(2k)}-a_n)/d$. If we write $a_{n+\pi(2k)}-a_n = 2kj$, then 
    $$N\left(\frac{k/d-1}{2}-y^*\right)\ =\ \frac{jk}{d}\left(\frac{k}{d}-1-2y^*\right),$$
    which is divisible by $a$. By Lemma \ref{l1}, 
    $\Gamma(k, a_n) = \Gamma(k, a_{n+\pi(2k)})$. Hence, $T_k((a_n)_{n=1}^\infty)$ divides $\pi(2k)$. 
\end{proof}

As an application of Theorem \ref{nt}, we know that $T_k((F_n)_{n=1}^\infty)$ is a divisor of the Pisano period of the Fibonacci sequence. For $k\in \mathbb{N}$, the $k$\textsuperscript{th} Pisano period, denoted by $\pi(k)$, is the period with which the Fibonacci sequence modulo $k$ repeats. For example, it is known that $\pi(2^j) = 3\cdot 2^{j-1}$ for every $j\in \mathbb{N}$, $\pi(3) = 8$, and $\pi(5) = 20$. Table \ref{Bn} compares the values of $T_k((F_n)_{n=1}^\infty)$ with $\pi(2k)$. 

\begin{table}[H]
\centering
\begin{tabular}{ |c| c| c||c|c|c|}
\hline
$k$ &$T_k((F_n)_{n=1}^\infty)$& $\pi(2k)$ & $k$ &$T_k((F_n)_{n=1}^\infty)$& $\pi(2k)$ \\
\hline
$1$ & $1$ & $3$ & $6$ & $24$ & $24$ \\
\hline
$2$ & $6$ & $6$ & $7$ & $16$ & $48$ \\
\hline
$3$ & $8$ & $24$ & $8$ & $24$ & $24$ \\
\hline
$4$ & $12$ & $12$ & $9$ & $24$ & $24$ \\
\hline
$5$ & $20$ & $60$ & $10$ & $60$ & $60$ \\
\hline
\end{tabular}
\caption{The first $10$ values of $T_k((F_n)_{n=1}^\infty)$ and $\pi(2k)$ with $1\le k\le 10$.}
\label{Bn}
\end{table}

\begin{cor}
    For each $k\in \mathbb{N}$, $T_{k}((F_n)_{n=1}^\infty)$ divides $\pi(2k)$.
\end{cor}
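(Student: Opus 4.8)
The plan is to derive this as an immediate special case of Theorem \ref{nt}. First I would observe that the Fibonacci recurrence $F_n = F_{n-1} + F_{n-2}$ for $n \ge 3$ is precisely a relation of the form \eqref{e13} with $s = 2$, $(c_1, c_2) = (1,1) \in \mathbb{Z}^2$, and $(t_1, t_2) = (1,1) \in \mathbb{Z}^2_{\ge 0}$; moreover $(F_n)_{n=1}^\infty$ is increasing for $n \ge 2$ (and $F_1 = F_2 = 1$ causes no trouble since Theorem \ref{nt} only asks for the recurrence and monotonicity to hold for sufficiently large $n$). Hence all hypotheses of Theorem \ref{nt} are met, so $(\Gamma(k, F_n))_{n=1}^\infty$ is eventually periodic for every $k \in \mathbb{N}$, and $T_k((F_n)_{n=1}^\infty)$ divides the eventual period of $(F_n \bmod 2k)_{n=1}^\infty$.

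The one point that deserves a sentence is that the quantity $\pi(2k)$ appearing in the conclusion of Theorem \ref{nt} — the \emph{eventual} period of $(F_n \bmod 2k)$ — agrees with the Pisano period $\pi(2k)$ as defined in the paragraph preceding Table \ref{Bn}. This holds because the Fibonacci sequence modulo any modulus $m$ is \emph{purely} periodic, not merely eventually periodic: the recurrence can be run backwards via $F_{n-2} = F_n - F_{n-1}$, so the pair $(F_{n}, F_{n+1}) \bmod m$ traces a cycle that includes $(F_1, F_2) = (1,1)$. Thus the eventual period coincides with the (pure) period, which is exactly the Pisano period.

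Combining these two observations finishes the proof: $T_k((F_n)_{n=1}^\infty)$ divides $\pi(2k)$ for every $k \in \mathbb{N}$. There is essentially no obstacle here beyond the bookkeeping of matching notation; the real content was already packaged into Theorem \ref{nt} and its supporting Lemma \ref{l1}. If anything, the only place to be slightly careful is confirming that the "sufficiently large $n$" qualifier in Theorem \ref{nt} is harmless, which it is, since eventual periodicity and the divisibility claim are unaffected by finitely many initial terms.
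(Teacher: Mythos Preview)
Your proposal is correct and matches the paper's approach: the corollary is stated there as an immediate application of Theorem \ref{nt}, with no separate proof given. Your extra sentence verifying that the Fibonacci sequence is \emph{purely} periodic modulo $2k$ (so the eventual period in Theorem \ref{nt} coincides with the Pisano period) is a nice clarification that the paper leaves implicit.
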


\begin{prob}
For each $k$, compute $T_{k}((F_n)_{n=1}^\infty)$.      
\end{prob}

%%%%%%%%%%%%%%%%%%%%%%%%%%%%%%%%%%%%%%%%%%%%%%%%%%%%%%%%%%%%%%%%%%%%%%%%%%%%%%%%%%%%%%%%%%%%%%%%%%%%%%%%%%%%
%%%%%%%%%%%%%%%%%%%%%%%%%%%%%%%%%%%%%%%%%%%%%%%%%%%%%%%%%%%%%%%%%%%%%%%%%%%%%%%%%%%%%%%%%%%%%%%%%%%%%%%%%%%%
%%%%%%%%%%%%%%%%%%%%%%%%%%%%%%%%%%%%%%%%%%%%%%%%%%%%%%%%%%%%%%%%%%%%%%%%%%%%%%%%%%%%%%%%%%%%%%%%%%%%%%%%%%%%
%%%%%%%%%%%%%%%%%%%%%%%%%%%%%%%%%%%%%%%%%%%%%%%%%%%%%%%%%%%%%%%%%%%%%%%%%%%%%%%%%%%%%%%%%%%%%%%%%%%%%%%%%%%%
\section{Future Work}

Besides the problems already mentioned, we end with a few naturally related questions to investigate; we hope to explore several of them with additional summer research students over the next few years, and invite readers who are interested in such problems to contact us.

%%%%%%%%%%%%%%%%%%%%%%%%%%%%%
%%%%%%%%%%%%%%%%%%%%%%%%%%%%%
%%%%%%%%%%%%%%%%%%%%%%%%%%%%%
\subsection{More recurrences}

Can we axiomatize the key steps in our proofs and classify other recurrence relations which can be analyzed by a similar argument? In particular, what properties of the investigated sequences are essential, from the defining recurrence to the initial conditions.

%%%%%%%%%%%%%%%%%%%%%%%%%%%%%
%%%%%%%%%%%%%%%%%%%%%%%%%%%%%
%%%%%%%%%%%%%%%%%%%%%%%%%%%%%
\subsection{More variables}

The most natural generalization is to increase the number of variables. The original result states that for every pair of relatively prime integers $a$ and $b$,  exactly one of the two equations $$i + ax + by \ = \ \frac{(a-1)(b-1)}{2}, \ \ \ i \in \{0, 1\}$$ has a solution in non-negative integers, and the solution is unique. 

We can extend the system to $n$ variables and $n$ equations 
\begin{equation}\label{ee1}i + a_1 x_1 + \cdots + a_n x_n \ = \ \frac{(a_1 - 1) \cdots (a_n - 1)}{2}, \ \ \ i \in \{0, 1, \dots, n-1\}.\end{equation} For a fixed $n$, for what $n$-tuples $(a_1, \dots, a_n)$ of positive, relatively prime integers does only one of the $n$ equations have a solution $(x_1, \dots, x_n)$ in non-negative integers? What is the relation between the tuple and the equation that has a solution? If $n=2$, we know that all such $n$-tuples work.

Note that in \eqref{ee1}, we kept the denominator as $2$.
As the $a$'s are relatively prime as long as we have at least two  of them then at least one of them must be odd, and thus these fractions are integers.

%%%%%%%%%%%%%%%%%%%%%%%%%%%%%
%%%%%%%%%%%%%%%%%%%%%%%%%%%%%
%%%%%%%%%%%%%%%%%%%%%%%%%%%%%
\subsection{Varying parameters}

We may modify the equations by choosing two integers $r, s$ and replacing $(a-1)(b-1)$ on the right hand side with $(a-r)(b-s)$. We can now ask many questions. 

Can we find pairs $(r, s)$ such that exactly one equation has a nonnegative solution, and the solution is unique? For such a pair $(r,s)$, find an analog of Theorem \ref{mt} and determine how often the first equation has a solution. Note some care may be required to ensure that the fractions are integers (if it is not an integer then there cannot be a solution).

As the above may be too strong of a condition in general, we consider a natural weakening. Instead of insisting that for every pair of relatively prime positive integers, exactly one of the two has a solution in non-negative integers, we require that to hold for at least $p$\% of the time for some $p \in [0, 1]$.

In honor of \cite{Be}, we call a pair $(r, s)$ a $p$-Beiter-good pair if for at least $p$\% of relatively prime integers $(a, b)$ with $0 < a, b \le x$ (as $x$ goes to infinity), exactly one of the two equations has a solution. Classify such pairs; clearly, if a pair is $p$-Beiter-good, it is also $q$-Beiter good for every $q < p$. 

Alternatively, can we find pairs $(r, s)$ such that both equations always have solutions? Such that neither has a solution?  

%%%%%%%%%%%%%%%%%%%%%%%%%%%%%
%%%%%%%%%%%%%%%%%%%%%%%%%%%%%
%%%%%%%%%%%%%%%%%%%%%%%%%%%%%
\subsection{Slower growth rate for given splits}
For every $p\in [0,1]$, Corollary \eqref{cc1} constructs an increasing sequence $(a_n)_{n=0}^\infty$ recursively with 
$$\lim_{N\rightarrow\infty}\frac{\#\{n\le N\,:\, \Gamma(a_{n-1},a_n) = 0\}}{N} \ =\ p.$$
As noted in Remark \ref{rr1}, the sequence grows exponentially fast (faster than
the Fibonacci sequence). Is there a construction involving a strictly increasing sequence that grows slower than the Fibonacci sequence, or perhaps even like $n^k$ for some $k$? 
What are some interesting values of $p$ that give natural sequences $(a_n)_{n=1}^\infty$?

%%%%%%%%%%%%%%%%%%%%%%%%%%%%%%%%%%%%%%%%%%%%%%%%%%%%%%%%%%%%%%%%%%%%%%%%%%%%%%%%%%%%%%%%%%%%%%%%%%%%%%%%%%%%
%%%%%%%%%%%%%%%%%%%%%%%%%%%%%%%%%%%%%%%%%%%%%%%%%%%%%%%%%%%%%%%%%%%%%%%%%%%%%%%%%%%%%%%%%%%%%%%%%%%%%%%%%%%%
%%%%%%%%%%%%%%%%%%%%%%%%%%%%%%%%%%%%%%%%%%%%%%%%%%%%%%%%%%%%%%%%%%%%%%%%%%%%%%%%%%%%%%%%%%%%%%%%%%%%%%%%%%%%

\ \\ 

\end{document}